\DeclareMathOperator*{\argmax}{arg\,max}
\DeclareMathOperator*{\argmin}{arg\,min}
\definecolor{brandeisblue}{rgb}{0.0, 0.44, 1.0}
\newcommand{\revision}[1]{{ #1 }}
\pgfplotsset{compat=newest}
\let\cl@chapter\undefined
\begin{document}

\title{Robust Bilevel Optimization for Near-Optimal Lower-Level Solutions}
\subtitle{}


\author{Mathieu Besançon \and
        Miguel F. Anjos \and
        Luce Brotcorne
}


\institute{Mathieu Besançon \at
              AIS2T, Zuse Institute Berlin, Germany\\
              \email{besancon@zib.de}           
        \and
           Miguel F. Anjos \at
              School of Mathematics, University of Edinburgh, James Clerk Maxwell Building, Peter Guthrie Tait Road, Edinburgh EH9 3FD, UK, and GERAD, 3000, chemin de la C\^ote-Sainte-Catherine, Montr\'eal (Qu\'ebec), H3T 2A7, Canada
        \and
            Luce Brotcorne \at
            INOCS, INRIA Lille Nord-Europe, 40 avenue Halley, 59650 Villeneuve-d'Ascq, France
}


\date{}

\maketitle

\begin{abstract}
Bilevel optimization problems embed the optimality of a subproblem
as a constraint of another optimization problem.
We introduce the concept of near-optimality robustness for bilevel
optimization, protecting the upper-level solution feasibility from limited deviations
from the optimal solution at the lower level.
General properties and necessary conditions for the existence of solutions are derived
for near-optimal robust versions of general bilevel optimization problems.
A duality-based solution method is defined when the lower level is convex,
leveraging the methodology from the robust and bilevel literature.
Numerical results assess the efficiency of exact and heuristic methods and the impact
of valid inequalities on the solution time.
\keywords{bilevel optimization, robust optimization, decision-dependent uncertainty, bounded rationality, duality, bilinear constraints, extended formulation}
\subclass{
 90C33\and 
 90C46\and 
 91A65\and 
 90C26\and 
 90C34\and 
 }
\end{abstract}

\section{Introduction}
Bilevel optimization problems embed the optimality conditions of a
subproblem into the constraints of another one.
They can faithfully model various decision-making problems such as Stackelberg or
leader-follower games, market equilibria, or pricing and revenue management.
A recent review of methods and applications of bilevel problems is presented in \cite{dempe2018review}.
In the classical bilevel setting, when optimizing its objective function,
the upper level anticipates an optimal reaction of the lower level
to its decisions. However, in many practical cases, the
lower level can make near-optimal decisions, by which we mean
decisions that respect bilevel feasibility but with the corresponding objective value different from the optimal value by up to an additive constant.
An important issue in this setting is the definition of the robustness of the
upper-level decisions with respect to such near-optimal lower-level solutions.

In some engineering applications \cite{dempe2002foundations,stavroulakis1998optimal,nicholls1998optimizing},
the decision-maker optimizes an outcome over a dynamical system (modelled as the lower level).
For stable systems, the rate of change of the state variables decreases as the
system converges towards the minimum of a potential function.
If the system is stopped before reaching the minimum, the designer of the system
would require that the upper-level constraints be feasible for near-optimal lower-level solutions.

In economics and decision theory, the concept of \emph{bounded rationality} \cite{simon1972theories}
or $\varepsilon$-rationality \cite{aumann1997rationality}
defines an economic and behavioural interpretation of a decision-making
process where an agent aims to take \emph{any} solution associated with a
``satisfactory'' objective value instead of the optimal one.

Protecting the upper level from a violation of its constraints by deviations of
the lower level is a form of robust optimization, and it is through these lenses that we will view it in this paper.
Therefore, we use the terms ``near-optimality robustness'' and
``near-optimal robust bilevel problem'' or NRB in the rest of the paper
to refer to this form of robustness.
\\

The introduction of uncertainty and robustness in games
has been approached from different points of view in the literature,
with for instance \cite{Aghassi2006} for the existence of robust counterparts
of Nash equilibria without the knowledge of probability distributions associated with the uncertainty.
For uncertainty in hierarchical games and bilevel problems which is our focus in this work,
we refer to the recent survey \cite{beck2022survey} for an overview of approaches and formulations.
In \cite{wardrop07}, the robust version of a network congestion problem is
developed. Users are assumed to make decisions under bounded rationality,
leading to a robust Wardrop equilibrium.
Robust versions of bilevel problems modelling
specific Stackelberg games have been studied in \cite{jain2008robust,pita2010robust},
using robust formulations to protect the leader against non-rationality or
partial rationality of the follower.
A stochastic version of the pessimistic bilevel problem is studied in \cite{yanikoglu2018decision},
where the realization of the random variable occurs after the upper level and before the lower level.
The authors then derive lower and upper bounds on the pessimistic and optimistic
versions of the stochastic bilevel problem as MILPs, leveraging an exact linearization
by assuming the upper-level variables are all binary.
The models developed in \cite{zare2018class} and \cite{zare2020bilevel}
explore different forms of bounded or partial rationality of the lower level,
where the lower level either makes a decision using a heuristic or approximation algorithm or
may deviate from its optimal value in a way that penalizes the objective of the upper level.
In \cite{beck2021robust}, a bilevel model is developed with the lower-level agent facing limit
observability of the upper-level decisions, resulting in another form of uncertainty for the upper level;
the uncertainty is formulated in a fashion inspired by the concept of near-optimality robustness
presented in this work.
In \cite{buchheim2022robust}, a robust version of the bilevel continuous knapsack is considered,
where the upper level does not have a complete knowledge of the lower-level objective function.
They also establish complexity results for the problem which depend on the discrete or continuous
nature of the uncertainty set of the lower-level objective coefficients.\\

Solving bilevel problems under limited deviations of the lower-level response
was introduced in \cite{Wiesemann2013} under the term
``$\varepsilon$-approximation'' of the pessimistic bilevel problem.
The authors focus on the independent case, i.e.~problem settings where the
lower-level feasible set is independent of the upper-level decisions.
Problems in such settings are shown to be simpler to handle than the dependent case
and can be solved in polynomial time when the lower-level problem is
linear under the optimistic and pessimistic assumptions \cite[Theorem 2.2]{Wiesemann2013}.
A custom algorithm is designed for the independent case,
solving a sequence of non-convex non-linear problems relying on global nonlinear solvers.
We must highlight that other bilevel formulations such as the one presented in \cite{10.1007/BFb0083589} have used
concepts of approximate lower level solutions, but that
these formulations are relaxations of the original problem, allowing solutions that are
$\varepsilon$-optimal for the second level and therefore not necessarily bilevel-feasible
unlike our approach or that of \cite{Wiesemann2013}
which \emph{restrict} the feasible space of the standard (both optimistic and pessimistic) bilevel optimization formulation
and robustify the solution.
\\

We consider bilevel problems involving upper- and lower-level variables in the
constraints and objective functions at both levels,
thus more general than the independent ``$\varepsilon$-approximation'' from \cite{Wiesemann2013}.
Unlike the independent case, the dependent bilevel problem is $\mathcal{NP}$-hard even when the
constraints and objectives are linear.
By defining the uncertainty in terms of a deviation from optimality
of the lower level, our formulation offers a novel interpretation
of robustness for bilevel problems and Stackelberg games.
In the case of a linear lower level, we derive an exact MILP reformulation while
not requiring the assumption of pure binary upper-level variables.
\\

The main contributions of the paper are:
\begin{enumerate}
\item The definition and formulation of the dependent near-optimal robust bilevel problem, resulting in a generalized semi-infinite problem and its interpretation as a special case of robust optimization applied to bilevel problems.
\item The study of duality-based reformulations of {NRB} where the lower-level problem is convex conic or linear in \cref{sec:convex}, resulting in a finite-dimensional single-level optimization problem.
\item An extended formulation for the linear-linear {NRB} in \cref{sec:linear}, linearizing the bilinear constraints of the single-level model using disjunctive constraints.
\item Exact and heuristic solution methods for the linear-linear {NRB} in \cref{sec:solalgs} using the extended formulation and its properties.
\end{enumerate}

The paper is organized as follows. In \cref{sec:nodef}, we define the
concepts of near-optimal set and near-optimal robust bilevel problem.
We study the near-optimal bilevel problems with convex and linear lower-level problems in
\cref{sec:convex} and \cref{sec:linear} respectively.
In both cases, the near-optimal robust bilevel problem can be reformulated as a single level.
For a linear lower level, an extended formulation can be derived from the single-level problem.
Exact and heuristic solution algorithms are provided in \cref{sec:solalgs}
and computational experiments are conducted in \cref{sec:numerics},
comparing the extended formulation to the compact one and
studying the impact of valid inequalities.
Finally, in \cref{sec:conclusionNRB} we draw some conclusions and highlight
research perspectives on near-optimality robustness.

\section{Near-optimal set and near-optimal robust bilevel problem}\label{sec:nodef}

In this section, we first define the near-optimal set of the lower level and
near-optimality robustness for bilevel problems.
Next, we illustrate the concepts on an example and highlight several properties
of general near-optimal robust bilevel problems before focusing on the convex
and linear cases in the following sections.

\noindent
The general bilevel problem is classically defined as:
\begin{subequations}\label{prob:bilevelstandard}
\begin{align}
\min_{x}\,\, & F(x,v)\label{eq:basicobj} \\
\text{s.t.} \,\, &  G_k(x,v) \leq 0 & \forall k \in \left[\![m_u\right]\!]\label{eq:upper101} \\
& x \in \mathcal{X}\\
& v \in \argmin_{y \in \mathcal{Y}} \{f(x,y) \text{ s.t. } g_i(x,y) \leq 0\,\forall i \in \left[\![m_l\right]\!] \}.\label{eq:basiclower}
\end{align}
\end{subequations}

The upper- and lower-level objective functions are noted
$F, f : \mathcal{X} \times \mathcal{Y} \rightarrow \mathbb{R}$ respectively.
We denote with $\left[\![n\right]\!]$ the set of running indices $1\dots n$.
$n_U$, $n_L$ denote the number of variables of the upper and lower level respectively,
$m_U$, $m_L$ similarly denote the number of constraints.
Constraint~\eqref{eq:upper101} and $g_i(x,y) \leq 0\,\forall i \in \left[\![m_l\right]\!]$
are the upper- and lower-level constraints respectively.
In this section, we assume that $\mathcal{Y} = \mathbb{R}^{n_l}$
in order that the lower-level feasible set can be only determined by the
$g_i$ functions. The optimal value function $\phi(x)$ is defined as follows:
\begin{align}
& \phi: \mathbb{R}^{n_u} \rightarrow \{-\infty\} \cup \mathbb{R} \cup \{+\infty\}\nonumber \\
& \phi(x) = \min_{y} \{f(x,y) \text{ s.t. } g(x,y) \leq 0\}.\label{eq:optvalfunc}
\end{align}

To keep the notation succinct, the indices of the lower-level constraints $g_i$
are omitted when not needed as in Constraint~\eqref{eq:optvalfunc}.
Throughout the paper, it is assumed that the lower-level problem is feasible
and bounded for any given feasible upper-level decision.\\

\noindent
When, for a feasible upper-level decision, the solution $v$ to the lower-level is not unique, the bilevel problem is not well-defined and further assumptions are
required \cite{dempe2018review}.
In the \emph{optimistic} case, we assume that the lower level selects the optimal solution favouring
the upper level and the optimal solution disfavouring them the most in the \emph{pessimistic} case.
We refer the reader to \cite[Chapter 1]{dempe15} for further details on these two approaches.
\revision{The optimistic case is the most straightforward to formulate using the value function:}
\begin{align}
	\min_{x,v}\,\, & F(x,v) &&\label{prob:optimistic} \tag{B} \\
	\text{s.t.}\,\,& G_k(x,v) \leq 0 && k \in \left[\![m_u\right]\!]\nonumber \\
	& x\in \mathcal{X} && \nonumber\\
	& f(x,v) \leq \phi(x) && \nonumber\\
	& g_i (x,v) \leq 0 && \forall i \in \left[\![m_l\right]\!].\nonumber
\end{align}
\revision{Note that a near-optimal robust problem can be constructed from the original ``unspecified'' problem \eqref{prob:bilevelstandard} or from the optimistic formulation \eqref{prob:optimistic}.}

Instead of making an explicit optimistic/pessimistic assumption about the reaction $v$ of the lower level, we propose a means for the upper level to protect itself against possible follower deviations from its optimality. In other words, we seek a decision $x$ at the upper level that is {\it robust} in the sense that it remains feasible even if the lower level deviates from its own optimality. For this purpose, for a given upper-level decision $x$ and tolerance $\delta$, we define the near-optimal set of the lower level
$\mathcal{Z}(x; \delta)$ as:
\begin{equation*}
\mathcal{Z}(x; \delta) = \{y \,\,|\,\, g(x,y) \leq 0,\, f(x,y) \leq \phi(x) + \delta\}.
\end{equation*}
\noindent
A Near-Optimal Robust Bilevel Problem NRB, of parameter $\delta$ is
defined as a bilevel problem with the additional constraints \eqref{eq:robcons} below ensuring that the upper-level constraints must be satisfied for
any lower-level solution $z$ in the near-optimal set $\mathcal{Z}(x;\delta)$:
\begin{subequations}\label{prob:foundation}
\begin{align}
\text{(NRB)}\,\, \min_{x,v}\,\, & F(x,v)\label{eq:NRBobj}\\
\text{s.t.} \,\, & G_k(x,v) \leq 0 &\forall k \in \left[\![m_u\right]\!]\label{eq:redundant} \\
& f(x,v) \leq \phi(x)\\
& g(x, v) \leq 0\\
& G_k(x,z) \leq 0\,\, \forall z \in \mathcal{Z}(x;\delta) &\forall k \in \left[\![m_u\right]\!]\label{eq:robcons}\\
& x \in \mathcal{X}.\label{eq:foundationlast}
\end{align}
\end{subequations}

\noindent
Each $k$ constraint in \eqref{eq:redundant} is satisfied if the
corresponding constraint set in \eqref{eq:robcons} is non-empty and holds and is therefore redundant
since $v \in \mathcal{Z}(x;\delta)$.
However, we mention Constraint~\eqref{eq:redundant} in the formulation to highlight the structure
of the initial bilevel problem in the near-optimal robust formulation.
\\

NRB captures, unifies, and extends several common formulations
in bilevel optimization.
The special case $\mathcal{Z}(x;0)$ is the set of optimal solutions to the
original lower-level problem, NRB with $\delta=0$ is therefore equivalent to the constraint-based
pessimistic bilevel problem as formulated in \cite{Wiesemann2013}:
\begin{equation*}
	f(x,y) \leq \phi(x) \,\,\forall y \in \mathcal{Z}(x;0).
\end{equation*}
For $\delta<0$, $\mathcal{Z}(x;\delta)$ is the empty set, in which case NRB
is equivalent to the original optimistic bilevel problem.
The set $\mathcal{Z}(x;\infty)$ corresponds to the complete lower-level feasible set,
assuming the lower-level optimal solution is not unbounded for the given upper-level
decision $x$. It therefore results in an optimistic bilevel formulation with a classical
robustness constraints at the upper level.
We also note the connection or near-optimality robustness to the uncertainty model proposed in \cite{zare2020bilevel},
in which the lower-level decision is assumed to be derived from an exact or heuristic
method known from a fixed set of known algorithms. In contrast, we do not make assumptions on a solution process
but consider that the lower-level problem may be solved to near-optimality with a fixed additive constant $\delta$.
This corresponds naturally to several classes of exact algorithms which provide a guarantee on the optimality gap
that depends on the computational effort (e.g.~first-order, interior point methods, branch-and-bound algorithms),
and to all approximation algorithms which would provide solutions with a bound on the optimality gap.
\\

Unlike the constraint-based pessimistic bilevel problem presented in \cite{Wiesemann2013},
the upper-level objective $F(x, v)$ depends on both the upper-
and lower-level variables, but is only evaluated with the optimistic lower-level
variable $v$ and not with a worst-case near-optimal solution.
This modelling choice is enabled by the {NRB} formulation
we chose which uses the optimistic lower-level response $v$ explicitly.
It also implies that the upper level chooses the best optimistic decision
which also protects its feasibility from near-optimal deviations.
One implication for the modeller is that a near-optimal robust problem can be
constructed directly from a bilevel instance where the objective function often
depends on the variables of the two levels, without an epigraph formulation of the objective function.
Alternatively, the near-optimal robust formulation can protect both the upper-level
objective value and constraints from near-optimal deviations of the lower level
using an epigraph formulation introducing an additional variable:
\begin{subequations}\label{prob:nobopconservative}
\begin{align}
\text{(C-NRB)}\,\, \min_{x,v,\tau}\,\, & \tau\label{eq:nobopconservativeobj}\\
\text{s.t.} \,\, & G_k(x,v) \leq 0 & \forall k \in \left[\![m_u\right]\!] \\
& f(x,v) \leq \phi(x)\\
& g(x, v) \leq 0 \\
& F(x, z) \leq \tau  & \forall z \in \mathcal{Z}(x;\delta)\\
& G_k(x,z) \leq 0 & \forall z \in \mathcal{Z}(x;\delta) \,\, \forall k \in \left[\![m_u\right]\!]\\
& x \in \mathcal{X}.\label{eq:nobopconservativelast}
\end{align}
\end{subequations}
\noindent
The two models define different levels of conservativeness and risk. Indeed:
\begin{equation*}
\text{opt(B)} \leq \text{opt(NRB)} \leq \text{opt(C-NRB)},
\end{equation*}
where $\text{opt}(\mathrm{P})$ denotes the optimal value of problem P.
Both near-optimal robust formulations NRB and C-NRB can be of interest to model decision-making applications.
It can also be noted that NRB includes the special
case of interdiction problems, i.e.~problems for which
$F(x, v) = - f(x, v)$.
The two models offer different levels of conservativeness and risk
and can both be of interest when modelling decision-making applications.
\revision{We will focus on Problem~\eqref{prob:foundation}, NRB, in the rest of this article, but all results and formulations extend to Problem~\eqref{prob:nobopconservative}.}
\\

Constraint~\eqref{eq:robcons} is a generalized semi-infinite constraint \cite{gensemiinf}.
The dependence of the set of constraints
$\mathcal{Z}(x;\delta)$ on the decision variables leads to the characterization
of NRB as a robust problem with decision-dependent uncertainty~\cite{Goel2006}.
Each constraint in the set \eqref{eq:robcons} can be replaced by the
corresponding worst-case second-level decision $z_k$ obtained as the solution of
the \emph{adversarial problem}, parameterized by $(x, v, \delta)$:
\begin{subequations}\label{prob:genadv}
\begin{align}
z_k \in \argmax_{y}\,\,& G_k(x,y)\label{eq:advobj} \\
\text{s.t.}\,\, & f(x,y) \leq \phi(x) + \delta \\
& g(x,y) \leq 0.\label{eq:advcons0}
\end{align}
\end{subequations}
The addition of the semi-infinite near-optimal robustness constraint increases the computational difficulty
of the bilevel optimization problem.
Nonetheless, {NRB} and multilevel optimization problems with
similar forms of uncertainty on lower-level decisions do not increase the complexity
of the multilevel problem in the polynomial hierarchy under mild conditions \cite{besanccon2021complexity}.
For bilevel knapsack problems with an uncertain lower-level objective, \cite{buchheim2021complexity} establishes complexity
results in the discrete and continuous lower level cases.

It is common in the robust optimization literature to present models with either uncertainty
on the constraints and/or on the objective function \cite{Gabrel2014}.
As for these, we show that the Objective-Robust Near-Optimal Bilevel Problem (O-NRB),
is a special case of {NRB}:
\begin{align*}
\text{(O-NRB)}\,\, \min_{x \in X} & \sup_{z\in \mathcal{Z}(x;\delta)} F(x,z) \\
\text{s.t.} \,\, & X = \{x \in \mathcal{X}, G_k(x) \leq 0 \,\, \forall k \in \left[\![m_u\right]\!]\}\\
\text{and where: } & \mathcal{Z}(x;\delta) = \{y \,\, | \,\, g(x,y) \leq 0, f(x,y) \leq \phi(x) + \delta\}.
\end{align*}
\noindent
In contrast to most objective-robust problem formulations, the uncertainty
set $\mathcal{Z}$ depends on the upper-level solution $x$, qualifying
O-NRB as a problem with decision-dependent uncertainty.\\

O-NRB is a special case of {NRB}, following a reformulation from objective uncertainty to
constraint uncertainty with an epigraph variable,
O-NRB is equivalent to:
\begin{align*}
\min_{x,\tau}\,\, & \tau \\
\text{s.t.} \,\, &  x \in X\\
& F(x,z) \leq \tau & \forall z \in \mathcal{Z}(x;\delta),
\end{align*}
\revision{this formulation is a special case of {NRB} with an upper-level objective independent of lower-level variables}.
The pessimistic bilevel optimization problem defined in \cite{Loridan1996} is both
a special case and a relaxation of {O-NRB}.
For $\delta=0$, the adversarial problem of
{O-NRB} is equivalent to finding the worst lower-level
decision with respect to the upper-level objective amongst
the lower-level-optimal solutions.
For any $\delta > 0$, the inner problem can select
the worst solutions with respect to the upper-level objective that are not
optimal for the lower level.
The pessimistic bilevel problem is therefore a relaxation of {O-NRB}.\\

\noindent
\revision{We illustrate the concept of near-optimal set and near-optimal robust
solution, first with a simple linear bilevel problemrepresented in \cref{fig:bp1} for a geometric intuition, and then in TODO to show the interest of near-optimality robustness in practical applications.}
\begin{align}
\min_{x,v}\,\, & x \label{prob:example0}\\
\text{s.t.} \,\, & x \geq 0 \nonumber \\
& v \geq 1 - \frac{x}{10}\nonumber \\
& v \in \text{arg} \max_y \{y \text{ s.t. } y \leq 1 + \frac{x}{10} \}.\nonumber
\end{align}
The high-point relaxation of Problem~\eqref{prob:example0}, obtained by relaxing the
optimality constraint of the lower level, while maintaining feasibility, is:
\begin{align*}
\min_{x,v}\,\, & x \\
\text{s.t.} \,\,& x \geq 0 \\
& v \geq 1 - \frac{x}{10}\\
& v \leq 1 + \frac{x}{10}.
\end{align*}
\noindent
The shaded area in \cref{fig:bp1} represents the interior of the polytope, which is
feasible for the high-point relaxation. The induced set, resulting from the
optimal lower-level reaction, is given by:
$\{(x,y) \in (\mathbb{R}_+,\mathbb{R}) \,\text{s.t.}\, y = 1 + \frac{x}{10}\}.$
\noindent
The unique optimal point is $(\hat{x}, \hat{y}) = (0,1)$.

\definecolor{xdxdff}{rgb}{0.49019607843137253,0.49019607843137253,1}
\definecolor{zzttqq}{rgb}{0.6,0.2,0}
\definecolor{ududff}{rgb}{0.30196078431372547,0.30196078431372547,1}

\begin{figure}[H]
\centering
\begin{tikzpicture}[line cap=round,line join=round,>=triangle 45,x=0.6cm,y=0.6cm]
\begin{axis}[x=4cm,y=4cm,axis lines=middle,ymajorgrids=true,xmajorgrids=true,xmin=-0.05,xmax=2.0,ymin=0.45,ymax=1.75,xtick={0,0.5,...,4.5},ytick={0,0.5,...,2.5},]
\clip(-0.3,-0.9) rectangle (4.5,2.7159150117795665);
\fill[line width=2pt,color=zzttqq,fill=zzttqq,fill opacity=0.09] (0,1) -- (5.9649287145746985,0.4035071285425301) -- (5.96565524764239,1.5965655247642392) -- cycle;
\draw [line width=2pt,dash pattern=on 1pt off 1pt,domain=0:4.779549428669664] plot(\x,{(--1--0.1*\x)/1});
\draw [line width=0.7pt,domain=0:4.779549428669664] plot(\x,{(--1-0.1*\x)/1});
\draw [->,line width=2pt] (1.1613278971075351,1.1894718315451782) -- (1.16,1.69);
\draw [->,line width=2pt] (0.89,1.43) -- (0.39,1.43);
\begin{scriptsize}
\draw[color=black] (1.43,1.43) node {f(x) = -y};
\draw[color=black] (0.6847248618593862,1.563278625530058) node {F(x,y) = x};
\draw [fill=ududff] (0,1) circle[radius=2.5pt] ;
\draw[color=black] (0.07,1.17) node {$E$};
\draw [fill=xdxdff] (5.9649287145746985,0.4035071285425301) circle[radius=2.5pt] ;
\draw[color=black] (1.0,0.2) node {x};
\end{scriptsize}
\end{axis}
\end{tikzpicture}
\caption{Linear bilevel problem}
\label{fig:bp1}
\end{figure}
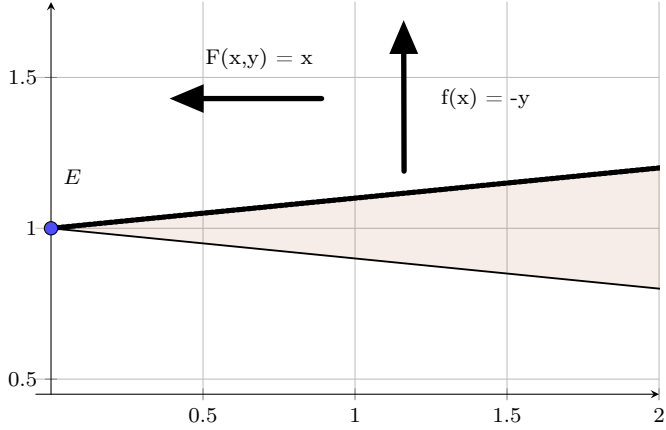

Let us now consider a near-optimal tolerance of the follower with $\delta=0.1$.
If the upper-level decision is $\hat{x}$, then the lower level can take any
value between $1 - \delta = 0.9$ and 1. All these values except 1 lead to an
unsatisfied upper-level constraint problem.
The problem can be reformulated as:
\begin{align*}
\min_{x,v}\,\, & x \\
\text{s.t.} \,\, & x \geq 0 \\
& v \geq 1 - \frac{x}{10}\\
& v \in \text{arg} \max_y \{y \text{ s.t. } y \leq 1 + \frac{x}{10} \} \\
& z \geq 1 - \frac{x}{10} \,\, \forall z \in \{z \,|\, z \leq 1 + \frac{x}{10}, z \geq v - \delta\}.
\end{align*}
\cref{fig:NRB1} illustrates the near-optimal equivalent of the problem with
an additional constraint ensuring the satisfaction of the upper-level constraint
for all near-optimal responses of the lower level.

\begin{figure}[h]
\centering
\begin{tikzpicture}[line cap=round,line join=round,>=triangle 45,x=0.6cm,y=0.6cm]
\begin{axis}[x=4cm,y=4cm,axis lines=middle,ymajorgrids=true,xmajorgrids=true,xmin=-0.05,xmax=2.0,ymin=0.45,ymax=1.75,xtick={-0.1,0.5,...,4.5},ytick={0,0.5,...,2.5},]
\clip(-0.2642017380059678,-0.4660834236385395) rectangle (4.779549428669664,2.7159150117795665);
\fill[line width=2pt,color=zzttqq,fill=zzttqq,fill opacity=0.05] (0,1) -- (5.9649287145746985,0.4035071285425301) -- (5.96565524764239,1.5965655247642392) -- cycle;
\draw [line width=2pt,dash pattern=on 1pt off 1pt,domain=0:4.779549428669664] plot(\x,{(--1--0.1*\x)/1});
\draw [line width=2pt,domain=0:4.779549428669664] plot(\x,{(--1-0.1*\x)/1});
\draw [line width=2pt,dotted,domain=-0.2642017380059678:4.779549428669664] plot(\x,{(--0.9--0.1*\x)/1});
\draw [->,line width=2pt] (1.1613278971075351,1.1894718315451782) -- (1.16,1.69);
\draw [->,line width=2pt] (0.89,1.43) -- (0.39,1.43);
\begin{scriptsize}
\draw[color=black] (1.43,1.43) node {f(x) = -y};
\draw[color=black] (0.6847248618593862,1.563278625530058) node {F(x,y) = x};
\draw [fill=ududff] (0,1) circle[radius=2.5pt] ;
\draw[color=black] (0.07,1.13) node {$E$};
\draw [fill=ududff] (0.5,1.05) circle[radius=2.5pt];
\draw[color=black] (0.43,1.13) node {$F$};
\draw [fill=xdxdff] (5.9649287145746985,0.4035071285425301) circle[radius=2.5pt];
\end{scriptsize}
\end{axis}
\end{tikzpicture}
\caption{Linear bilevel problem with a near-optimality robustness constraint}
\label{fig:NRB1}
\end{figure}
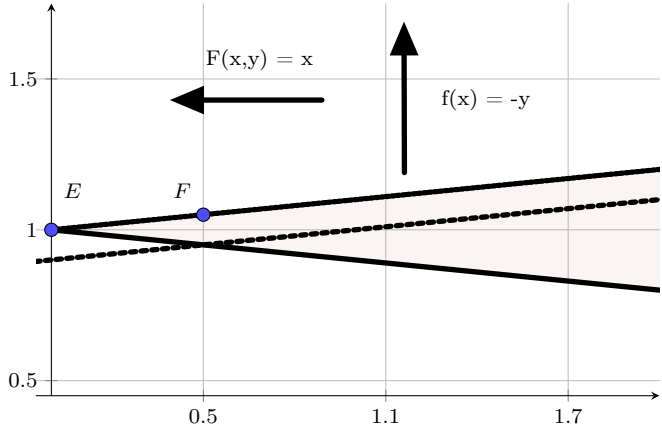

\noindent
This additional constraint is represented by the dashed line.
The optimal upper-level decision is $x = 0.5$,
for which the optimal lower-level reaction is $y = 1+0.1\cdot0.5 = 1.05$.
The boundary of the near-optimal set is $y = 1-0.1\cdot0.5 = 0.95$.\\

\revision{We next present the interpretation of near-optimality robustness in the context of two applications.
In \cite{besanccon2020bileveltlou}, a bilevel model is introduce to determine the parameters of a pricing scheme for electricity in the context of demand response.
The pricing scheme can be summarized as offering users two alternatives for a given time frame, either:
\begin{itemize}
	\item the user remains on the baseline price for the given time frame, in which case the price is ``flat'', i.e.~independent of the consumption,
	\item the user books a certain capacity for a time frame, benefitting from a price lower than the baseline if their consumption remains below the booked capacity, and having to pay a price higher than the baseline if they overconsume.
\end{itemize}
The energy supplier, acting as the leader, seeks to offer a price that incentivizes the user to book a capacity in a certain range, for instance to make the consumption more predictable.
Formulating the problem with the optimistic hypothesis implies that we assume the user will commit to booking a certain capacity and risking to potentially pay a higher price, even if the expected total cost is identical to the simpler flat price.
The pessimistic hypothesis also appears very limited in its mitigation of the problem, since it implies that the user choice preferred by the supplier need to be better than all others by any arbitrarily small gain.
Bounded rationality appears like a more natural representation of the user decision-making process, who would probably not switch to a more complex pricing scheme unless the net gain reaches a certain amount, therefore making near-optimality robustness the natural modelling framework for this problem.\\
Another application in last-mile delivery can be proposed based on the problem introduced in \cite{cerulli2023bilevel}.
In this model, a delivery platform, acting as the leader, must deliver parcels to customers by contracting intermediate delivery carriers acting as followers.
The platform must ensure that the parcels are delivered, and chooses a compensation to offer to each carrier for delivering a particular parcel.
The carriers then select a subset of parcels to deliver maximizing their profit subject to time or space constraints.
It is however natural to assume that some carriers may have constraints or costs that are unknown from the leader, e.g., preferred routes, types of customers, or time windows, but could amount to a difference in objective bounded by some quantity $\delta$.
Near-optimality robustness offers a framework to require solutions ensuring that all the parcels are delivered, even if some carriers choose a near-optimal route.\\

Generalizing from these application, near-optimal robust bilevel optimization is a natural modelling framework for leader-follower games where:
\begin{itemize}
	\item the leader has constraints that depend on the follower decisions,
	\item these constraints cannot be imposed directly to the follower, whose choice can mainly be changed through incentives on their objective,
	\item the user could make a decision with bounded rationality.
\end{itemize}
Typically, when the follower objective amounts to a financial loss or profit, the optimistic and pessimistic hypotheses assume that the follower will adopt a particular behaviour, even if the relative benefit of that behaviour is infinitesimal for them.
In contrast, NRB incorporates a minimum incentive of $\delta$ directly in the model, ensuring the feasibility of the solution for the upper level despite bounded rationality.
Other applications following this structure include pricing problems in networks and shared systems, e.g.~passenger trains and flights, where the price is the only incentive mechanism,
and in which the upper level needs to ensure the system is not used more than its capacity, corresponding to overbooking or overutilization of some resources.
We also note that one group of problems in bilevel optimization that is not suited to NRB is that of interdiction games, since the lower level is already adversarially trying to impact the upper level negatively.
}

In the rest of this section, we establish properties of the near-optimal set and near-optimal
robust bilevel problems.
If the lower-level optimization problem is convex, then
the near-optimal set $\mathcal{Z}(x;\theta)$ is convex as the intersection of two convex sets:
\begin{itemize}
	\item $\{y \,\,|\,\, g(x, y) \leq 0\}$
	\item $\{y \,\,|\,\, f(x, y) \leq \phi(x) + \delta\}$.
\end{itemize}

In robust optimization, the characteristics of the uncertainty set sharply
impact the difficulty of solving the problem.
The near-optimal set of the lower-level is not always bounded;
this can lead to infeasible or ill-defined near-optimal robust counterparts
of bilevel problems. In the next proposition, we define conditions under which
the uncertainty set $\mathcal{Z}(x;\delta)$ is bounded.

\begin{proposition}\label{prob:bounded}
For a given pair $(x,\delta)$, any of the following properties is sufficient
for $\mathcal{Z}(x;\delta)$ to be a bounded set:
\begin{enumerate}
	\item The lower-level feasible domain is bounded.
	\item $f(x,\cdot)$ is radially unbounded with respect to $y$, i.e.\ $\|y\|\rightarrow\infty \Rightarrow f(x, y) \rightarrow \infty$.
	\item $f(x,\cdot)$ is radially bounded such that:
	\begin{equation*}
	\lim_{r \rightarrow +\infty} f(x, r s) > f(x, v) + \delta\,\, \forall s \in \mathcal{S},
	\end{equation*}
	with $\mathcal{S}$ the unit sphere in the space of lower-level variables.
\end{enumerate}
\end{proposition}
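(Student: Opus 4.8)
The plan is to treat the three sufficient conditions independently, in increasing order of difficulty, and to exploit throughout the inclusion
$\mathcal{Z}(x;\delta) \subseteq \{y : g(x,y) \leq 0\} \cap \{y : f(x,y) \leq \phi(x) + \delta\}$, which is immediate from the definition of the near-optimal set.

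Condition~1 is essentially free: $\mathcal{Z}(x;\delta)$ is contained in the lower-level feasible set $\{y : g(x,y)\leq 0\}$, which is bounded by hypothesis, and a subset of a bounded set is bounded. For condition~2 I would use that radial unboundedness of $f(x,\cdot)$ is equivalent to all its sublevel sets being bounded, i.e.\ for every $M$ there is $R>0$ with $f(x,y) > M$ whenever $\|y\| > R$; applying this with $M = \phi(x)+\delta$ and using $\mathcal{Z}(x;\delta)\subseteq\{y : f(x,y)\leq \phi(x)+\delta\}$ gives the claim.

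The substance of the proposition is condition~3, which I would prove by contradiction. Since $v$ denotes an optimal lower-level response we have $f(x,v)=\phi(x)$, so the threshold $f(x,v)+\delta$ coincides with $\phi(x)+\delta$. Assume $\mathcal{Z}(x;\delta)$ is unbounded and pick $y_n\in\mathcal{Z}(x;\delta)$ with $r_n := \|y_n\| \to \infty$; write $y_n = r_n s_n$ with $s_n\in\mathcal{S}$. By compactness of $\mathcal{S}$, after passing to a subsequence, $s_n\to s^\star\in\mathcal{S}$. Set $\ell := \lim_{r\to\infty} f(x, r s^\star)$, which by hypothesis satisfies $\ell > \phi(x)+\delta$, and let $\eta := \tfrac{1}{2}(\ell-\phi(x)-\delta)>0$. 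Choose $\rho$ so that $f(x, r s^\star) \geq \phi(x)+\delta+\eta$ for all $r\geq\rho$. Combining this with continuity of $f(x,\cdot)$ and a local-uniform version of the radial-limit estimate around the direction $s^\star$, I would conclude that for all $n$ large enough (so that $r_n\geq\rho$ and $s_n$ is sufficiently close to $s^\star$) one has $f(x,r_n s_n) > \phi(x)+\delta$, i.e.\ $y_n\notin\mathcal{Z}(x;\delta)$, contradicting the choice of $y_n$. Hence $\mathcal{Z}(x;\delta)$ is bounded.

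The main obstacle is the final step of case~3: transferring the inequality $f(x, r s^\star) > \phi(x)+\delta$, which holds along the single ray through $s^\star$ for large $r$, to the nearby directions $s_n$ evaluated at the large radii $r_n$. This needs some uniformity of the radial behaviour in the directional variable (or a joint continuity/eventual-monotonicity property of $(r,s)\mapsto f(x,rs)$), which I would make explicit as part of the \emph{radially bounded} hypothesis, since pointwise finiteness of the radial limit alone does not force a uniform bound on $\{r\geq 0 : r s\in\mathcal{Z}(x;\delta)\}$ over $s\in\mathcal{S}$. Cases~1 and~2, by contrast, follow immediately from the defining containment.
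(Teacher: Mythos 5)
Your cases 1 and 2 are exactly the paper's argument: containment of $\mathcal{Z}(x;\delta)$ in the (bounded) lower-level feasible set, and the coercive reading of radial unboundedness (all sublevel sets bounded) applied at the level $\phi(x)+\delta=f(x,v)+\delta$. For case 3 you go further than the paper, which disposes of it in one sentence (``the third case follows the same line of reasoning as the second, with a lower bound in any direction''), and the obstruction you flag is real rather than an artifact of your proof strategy: the stated hypothesis is a separate limit along each ray, and such pointwise radial information does not in general bound a sublevel set. Concretely, take $f(x,\cdot)$ to be the Euclidean distance in $\mathbb{R}^2$ to the set $\{(t,t^2)\,:\,t\ge 1\}$; along every ray through the origin this function tends to $+\infty$, so the per-direction limit exceeds any finite threshold, yet the sublevel set at level $1$ contains the whole parabola branch and is unbounded. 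Hence some uniformity of the radial lower bound over $s\in\mathcal{S}$ (the ``local-uniform version'' you invoke near $s^\star$) must indeed be added to the hypothesis for case 3 to hold for general continuous $f$; the paper's one-line proof silently assumes exactly this uniformity. In short, your proof coincides with the paper's on cases 1 and 2, and on case 3 the gap you identify is a gap in the proposition and its published proof rather than in your argument --- your compactness-on-the-sphere contradiction is the right way to close it once the hypothesis is strengthened.
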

\begin{proof}
The first case is trivially satisfied since $\mathcal{Z}(x;\delta)$ is the
intersection of sets including the lower-level feasible set.
If $f(x,\cdot)$ is radially unbounded, for any finite $\delta > 0$,
there is a maximum radius around $v$ beyond which any value of the objective
function is greater than $f(x,v) + \delta$. The third case follows the same
line of reasoning as the second, with a lower bound in any direction
$\| y\| \rightarrow \infty$, such that this lower bound is above $f(x,v) + \delta$.
\qed
\end{proof}

The radius of robust feasibility is defined as the maximum ``size'' of the
uncertain set \cite{liers2019radius,GOBERNA201667}, such that the robust problem
remains feasible. In the case of near-optimality robustness,
the radius can be interpreted as the maximum deviation of the
lower-level objective from its optimal value, such that the near-optimal robust
bilevel problem remains feasible.
\begin{definition}[Radius of near-optimal feasibility]\label{def:tolestim}
\revision{
For a given optimistic bilevel optimization problem \eqref{prob:optimistic} , let $\mathrm{opt}_{\delta}(\mathrm{NRB})$ be the
optimal value of the near-optimal robust problem \eqref{prob:foundation} constructed from \eqref{prob:optimistic} with a tolerance $\delta$.}
The radius of near-optimal feasibility $\hat{\delta}$ is defined by:
\begin{align}
\hat{\delta} = \argmax_{\delta} \,\, \{\delta \text{  s.t. } \mathrm{opt}_{\delta}(\mathrm{NRB}) < \infty\}.
\end{align}
\end{definition}
The radius as defined in \Cref{def:tolestim} can
be interpreted as a maximum robustness budget in terms of the objective value
of the lower level. It represents the maximum level of tolerance of the lower level
on its objective, such that the upper level remains feasible.

\noindent
\begin{proposition}\label{prop:relax}
The optimistic bilevel problem \eqref{prob:optimistic} is a relaxation of
the corresponding near-optimal robust bilevel problem for any $\delta$.
\end{proposition}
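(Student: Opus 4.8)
The plan is to establish the relaxation by a direct feasible-set inclusion argument: I will show that every pair feasible for the near-optimal robust problem \eqref{prob:foundation} is feasible for the optimistic bilevel problem \eqref{prob:bilevelgeneric101} with the same objective value, from which $\mathrm{opt}(\textit{BiP}) \le \mathrm{opt}(\textit{NORBiP})$ follows immediately, i.e.\ \textit{BiP} is a relaxation.

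First I would fix $\delta > 0$ and take an arbitrary pair $(x,v)$ feasible for \eqref{prob:foundation}. By \eqref{eq:redundant} (equivalently, by applying \eqref{eq:robcons} to $z = v$, which belongs to $\mathcal{Z}(x;\delta)$), it satisfies $G_k(x,v)\le 0$ for all $k \in \left[\![m_u\right]\!]$; by \eqref{eq:foundationlast} it satisfies $x\in\mathcal{X}$; and it satisfies $g(x,v)\le 0$ together with $f(x,v)\le\phi(x)$. The key step is then to observe that these last two conditions are exactly the statement that $v$ solves the lower-level problem: since $g(x,v)\le 0$, the point $v$ is lower-level feasible, and under the standing assumption that the lower level is feasible and bounded for every $x$, the value $\phi(x)$ is finite and equals $\min_y\{f(x,y) : g(x,y)\le 0\}$, so $f(x,v)\le\phi(x)$ forces $f(x,v)=\phi(x)$ and hence $v\in\argmin_{y\in\mathcal{Y}}\{f(x,y)\ \text{s.t.}\ g_i(x,y)\le 0\ \forall i\}$. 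Thus $(x,v)$ is feasible for \eqref{prob:bilevelgeneric101}. Since both problems minimize the same objective $F(x,v)$ and the feasible set of \eqref{prob:foundation} is contained in that of \eqref{prob:bilevelgeneric101}, the optimal value of the optimistic problem is no larger than that of \textit{NORBiP}, which is what must be shown.

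There is essentially no analytical obstacle here; the only point requiring care is the equivalence between the constraint pair $\{g(x,v)\le 0,\ f(x,v)\le\phi(x)\}$ and lower-level optimality of $v$, which relies on the feasibility/boundedness assumption stated after \eqref{eq:optvalfunc}. I would close by remarking that the relaxation is obtained precisely by dropping the semi-infinite constraint \eqref{eq:robcons}: for $\delta\le 0$ the set $\mathcal{Z}(x;\delta)$ contributes no additional restriction and the two problems coincide, whereas for $\delta>0$ the inclusion can be strict, as the instance in \cref{fig:NORBiP1} illustrates, where the optimistic optimum is attained at $x=0$ while the \textit{NORBiP} optimum is $x=0.5$.
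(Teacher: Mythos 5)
Your proof is correct and follows essentially the same route as the paper: both arguments rest on the observation that \textit{NORBiP} is the optimistic problem augmented with the extra constraints \eqref{eq:robcons}, the paper formalizing this by lifting \textit{BiP} with dummy variables $z_{jk}$ into the same variable space while you verify the feasible-set inclusion pointwise (including the useful explicit check that $g(x,v)\le 0$ together with $f(x,v)\le\phi(x)$ encodes lower-level optimality of $v$). One peripheral slip in your closing remark: for $\delta=0$ the two problems do \emph{not} in general coincide, since $\mathcal{Z}(x;0)$ is the full set of lower-level optima and \textit{NORBiP} with $\delta=0$ is the pessimistic problem of \cite{Wiesemann2013}, which differs from the optimistic one whenever the lower-level solution is non-unique --- but this does not affect the proposition, which concerns $\delta>0$.
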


\begin{proof}
The optimistic bilevel problem \eqref{prob:optimistic} is equivalent to Problem~\eqref{prob:foundation}
without Constraints \eqref{eq:robcons} and has the same variables as Problem~\eqref{prob:foundation}.
\qed
\end{proof}

\begin{proposition}\label{prop:bileveladv}
If the optimistic bilevel problem \eqref{prob:optimistic} is feasible, then the adversarial problem
\eqref{prob:genadv} is feasible.
\end{proposition}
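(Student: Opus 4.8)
The plan is to show that whenever the bilevel problem \eqref{prob:foundation} admits a feasible point, the adversarial problem \eqref{prob:genadv} has a nonempty feasible region, for every index $k$. Recall that a feasible point of the bilevel problem consists of a pair $(x,v)$ with $x \in \mathcal{X}$, $g(x,v) \leq 0$, $f(x,v) \leq \phi(x)$, and $G_k(x,v) \leq 0$ for all $k$. The adversarial problem for index $k$ is parameterized by $(x,v,\delta)$ and maximizes $G_k(x,y)$ subject to $g(x,y) \leq 0$ and $f(x,y) \leq \phi(x) + \delta$. So the essential observation is that the feasible region of the adversarial problem is exactly the near-optimal set $\mathcal{Z}(x;\delta)$, and we need only exhibit one point in it.

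The key step is to verify that the optimistic lower-level response $v$ itself lies in $\mathcal{Z}(x;\delta)$. This is immediate: feasibility of $(x,v)$ gives $g(x,v) \leq 0$, so the first adversarial constraint holds; and $f(x,v) \leq \phi(x) \leq \phi(x) + \delta$, since $\delta \geq 0$ (for $\delta < 0$ the set $\mathcal{Z}(x;\delta)$ is empty, but in that regime Problem \eqref{prob:foundation} reduces to the optimistic bilevel problem and the robust constraints are vacuous, so we may restrict attention to $\delta \geq 0$). Hence $v \in \mathcal{Z}(x;\delta)$, which certifies that $y = v$ is feasible for \eqref{prob:genadv}. Since this holds for each $k \in [\![m_u]\!]$, all $m_u$ adversarial problems are feasible.

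One subtlety worth stating explicitly concerns boundedness of $\mathcal{Z}(x;\delta)$: feasibility of the adversarial problem is about nonemptiness of the feasible set, not about whether the supremum is attained or finite. If $\mathcal{Z}(x;\delta)$ is unbounded and $G_k(x,\cdot)$ is unbounded above on it, the adversarial problem is feasible but has unbounded optimal value; that is consistent with the statement, and is precisely the situation in which the near-optimal robust bilevel counterpart is ill-defined or infeasible, as discussed around \Cref{prob:bounded}. So I would phrase the proof to claim only feasibility, and perhaps remark that under the additional hypotheses of \Cref{prob:bounded} the adversarial optimum is also attained.

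I do not anticipate a genuine obstacle here — the result is essentially a definitional unpacking, and the only thing to be careful about is the $\delta \geq 0$ convention and the distinction between "feasible" and "has a finite optimum." The proof should read: take any feasible $(x,v)$ of \eqref{prob:foundation}; then $v$ satisfies $g(x,v)\leq 0$ and $f(x,v)\leq\phi(x)\leq\phi(x)+\delta$, hence $v\in\mathcal{Z}(x;\delta)$, so $v$ is a feasible point of \eqref{prob:genadv} for every $k$, which is therefore feasible. $\qed$
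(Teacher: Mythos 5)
Your proof is correct and takes the same route as the paper's: the paper simply observes that $z = v$ is feasible for the primal adversarial problem, which is exactly your argument that $v \in \mathcal{Z}(x;\delta)$ since $g(x,v) \leq 0$ and $f(x,v) \leq \phi(x) \leq \phi(x)+\delta$. Your additional remarks on the $\delta \geq 0$ convention and on feasibility versus boundedness are sensible but not needed for the statement.
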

\begin{proof}
If the bilevel problem is feasible, then the solution $z = v$ is feasible for
the primal adversarial problem.
\qed
\end{proof}

\begin{proposition}
If $(\hat{x},\hat{y})$ is a bilevel-feasible point, and $G_k(\hat{x},\cdot)$
is $K_k$-Lipschitz continuous for a given $k \in \left[\![m_u\right]\!]$
such that:
\begin{align*}
&G_k(\hat{x},\hat{y}) < 0,
\end{align*}
\noindent
then the constraint $G_k(\hat{x}, y) \leq 0$ is satisfied for all $y \in \mathcal{F}_L^{(k)}$  such that:
\begin{equation*}
\mathcal{F}_L^{(k)}(\hat{x}, \hat{y}) = \{y \in \mathbb{R}^{n_l}\,\, | \,\, \|y - \hat{y}\| \leq \frac{|G_k(\hat{x}, \hat{y})|}{K_k} \}.
\end{equation*}
\end{proposition}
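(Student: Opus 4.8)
The plan is to use the Lipschitz continuity of $G_k(\hat{x},\cdot)$ directly to bound the increase of $G_k$ over a ball centered at $\hat{y}$. First I would fix the index $k$ and abbreviate $c := |G_k(\hat{x},\hat{y})| = -G_k(\hat{x},\hat{y}) > 0$, which is well-defined and positive by the strict feasibility assumption $G_k(\hat{x},\hat{y}) < 0$. Then, for any $y \in \mathcal{F}_L^{(k)}(\hat{x},\hat{y})$, i.e. any $y$ with $\|y - \hat{y}\| \leq c / K_k$, I would apply the Lipschitz bound: $|G_k(\hat{x},y) - G_k(\hat{x},\hat{y})| \leq K_k \|y - \hat{y}\| \leq K_k \cdot (c / K_k) = c$.

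From this the conclusion is immediate: $G_k(\hat{x},y) \leq G_k(\hat{x},\hat{y}) + |G_k(\hat{x},y) - G_k(\hat{x},\hat{y})| \leq G_k(\hat{x},\hat{y}) + c = G_k(\hat{x},\hat{y}) - G_k(\hat{x},\hat{y}) = 0$, using $c = -G_k(\hat{x},\hat{y})$. Hence $G_k(\hat{x},y) \leq 0$ for every $y$ in the stated ball, which is exactly the claim. One edge case worth a word: if $K_k = 0$ the function is constant in $y$, the radius $c/K_k$ is interpreted as $+\infty$, and the constraint trivially holds everywhere, so the statement is vacuously consistent.

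There is essentially no main obstacle here — the statement is a direct corollary of the definition of Lipschitz continuity and the fact that a ball of radius $c/K_k$ keeps the perturbation of $G_k$ within $c$. The only point requiring the slightest care is the bookkeeping with the absolute value and the sign of $G_k(\hat{x},\hat{y})$, ensuring that ``$+c$'' exactly cancels the negative base value rather than overshooting; writing $c = -G_k(\hat{x},\hat{y})$ from the outset (legitimate since $G_k(\hat{x},\hat{y}) < 0$) makes this transparent. I would present the argument as the short two-line chain of inequalities above, with the Lipschitz step explicitly invoked.

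\begin{proof}
Fix $k \in \left[\![m_u\right]\!]$ and set $c := |G_k(\hat{x},\hat{y})| = -G_k(\hat{x},\hat{y})$, where the second equality holds because $G_k(\hat{x},\hat{y}) < 0$ by assumption; in particular $c > 0$. Let $y \in \mathcal{F}_L^{(k)}(\hat{x},\hat{y})$, so that $\|y - \hat{y}\| \leq c / K_k$. By $K_k$-Lipschitz continuity of $G_k(\hat{x},\cdot)$,
\begin{equation*}
|G_k(\hat{x},y) - G_k(\hat{x},\hat{y})| \leq K_k \|y - \hat{y}\| \leq K_k \cdot \frac{c}{K_k} = c.
\end{equation*}
Therefore
\begin{equation*}
G_k(\hat{x},y) \leq G_k(\hat{x},\hat{y}) + c = G_k(\hat{x},\hat{y}) - G_k(\hat{x},\hat{y}) = 0,
\end{equation*}
which establishes that $G_k(\hat{x},y) \leq 0$ for all $y \in \mathcal{F}_L^{(k)}(\hat{x},\hat{y})$. \qed
\end{proof}
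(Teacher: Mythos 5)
Your proof is correct, and it is in fact more direct than the one in the paper. The paper argues indirectly: it first invokes continuity to obtain some ball around $\hat{y}$ on which $G_k(\hat{x},\cdot)\leq 0$, then defines the maximal such radius $r_0$ via an auxiliary optimization problem, posits a boundary point $y_0$ with $G_k(\hat{x},y_0)=0$, and only then applies Lipschitz continuity to deduce $|G_k(\hat{x},\hat{y})|/K_k \leq \|y_0-\hat{y}\|$, so that the stated ball sits inside the maximal feasible one. That route carries extra baggage — it needs the maximal radius to be attained and a zero of $G_k(\hat{x},\cdot)$ to exist on the boundary (the paper itself hedges with ``if the feasible set is bounded''), none of which is required for the conclusion. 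Your forward estimate $G_k(\hat{x},y)\leq G_k(\hat{x},\hat{y})+K_k\|y-\hat{y}\|\leq 0$ reaches the same conclusion in two lines, sidesteps those technical caveats entirely, and your remark on the degenerate case $K_k=0$ is a sensible addition. Both proofs rest on the same single ingredient (Lipschitz continuity bounding the growth of $G_k$ away from $\hat{y}$), but yours packages it without the detour through the maximal-ball construction, which is the cleaner argument here.
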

\begin{proof}
As $G_k(\hat{x}, \hat{y}) < 0$, and $G_k (\hat{x},\cdot) $ is continuous,
there exists a ball $\mathcal{B}_r(\hat{y})$ in $\mathbb{R}^{n_l}$ centered on
$(\hat{y})$ of radius $r > 0$, such that
\begin{equation*}
G(\hat{x}, y) \leq 0\,\, \forall y \in \mathcal{B}_r(\hat{y}).
\end{equation*}
Let us define:
\begin{align}
r_0 = \argmax_r \,\,\, & \{r\,\,\, \text{s.t.} \,\,\, G(\hat{x}, y) \leq 0 \,\, \forall y \in \mathcal{B}_r(\hat{y})\}. \label{prob:maxR}
\end{align}
By continuity, Problem~\eqref{prob:maxR} always admits a feasible solution. If the feasible set is bounded,
there exists a point $y_0$ on the boundary of the ball, such that $G_k(\hat{x}, y_0) = 0$.
It follows from Lipschitz continuity that:
\begin{align*}
& |G_k(\hat{x}, \hat{y}) - G_k(\hat{x}, y_0)| \leq K_k \|y_0 - \hat{y}\| \\
& \frac{|G_k(\hat{x}, \hat{y})|}{K_k} \leq \|y_0 - \hat{y}\|.
\end{align*}
\noindent
$G_k(\hat{x}, y) \leq G_k(\hat{x}, y_0)\,\, \forall y \in \mathcal{B}_{r_0}(\hat{y})$,
therefore all lower-level solutions in the set
\begin{equation*}
\mathcal{F}_L^{(k)}(\hat{x}, \hat{y}) = \{y \in \mathbb{R}^{n_l}\, \text{s.t.} \, \|y - \hat{y}\| \leq \frac{|G_k(\hat{x}, \hat{y})|}{K_k} \}
\end{equation*}
satisfy the $k$-th constraint.
\qed
\end{proof}

\begin{corollary} Let $(\hat{x}, \hat{y})$ be a bilevel-feasible solution
\revision{to the optimistic bilevel problem \eqref{prob:optimistic}, $\delta$ a tolerance value, and}
\begin{equation*}
\mathcal{F}_L(\hat{x}, \hat{y}) = \bigcap_{k=1}^{m_u} \mathcal{F}_L^{(k)}(\hat{x}, \hat{y}),
\end{equation*}
\noindent
then $\mathcal{Z}(x;\delta) \subseteq \mathcal{F}_L(\hat{x}, \hat{y})$ is a
sufficient condition for near-optimality robustness of $(\hat{x}, \hat{y})$.
\end{corollary}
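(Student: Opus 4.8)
The plan is to obtain the corollary as an immediate consequence of the preceding proposition, applied one upper-level constraint at a time. First recall that ``near-optimal robustness of $(\hat{x},\hat{y})$'' means precisely that Constraint~\eqref{eq:robcons} holds at $\hat{x}$, i.e.\ $G_k(\hat{x},z)\leq 0$ for every $k\in\left[\![m_u\right]\!]$ and every $z\in\mathcal{Z}(\hat{x};\delta)$. So it is enough to show that the inclusion $\mathcal{Z}(\hat{x};\delta)\subseteq\mathcal{F}_L(\hat{x},\hat{y})$ forces all of these constraint evaluations to be nonpositive.

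I would fix an arbitrary index $k\in\left[\![m_u\right]\!]$. Since $(\hat{x},\hat{y})$ is bilevel-feasible and, by the standing hypotheses carried over from the previous proposition, $G_k(\hat{x},\cdot)$ is $K_k$-Lipschitz continuous with $G_k(\hat{x},\hat{y})<0$, that proposition gives $G_k(\hat{x},y)\leq 0$ for all $y\in\mathcal{F}_L^{(k)}(\hat{x},\hat{y})$. Now use the assumed inclusion together with $\mathcal{F}_L(\hat{x},\hat{y})=\bigcap_{j=1}^{m_u}\mathcal{F}_L^{(j)}(\hat{x},\hat{y})\subseteq\mathcal{F}_L^{(k)}(\hat{x},\hat{y})$: we get $\mathcal{Z}(\hat{x};\delta)\subseteq\mathcal{F}_L^{(k)}(\hat{x},\hat{y})$, hence $G_k(\hat{x},z)\leq 0$ for every $z\in\mathcal{Z}(\hat{x};\delta)$. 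Because $k$ was arbitrary, Constraint~\eqref{eq:robcons} is satisfied at $\hat{x}$, which is exactly near-optimal robustness of $(\hat{x},\hat{y})$.

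There is no genuinely hard step here; the statement repackages the previous proposition, so the only work is bookkeeping, and I expect the ``obstacle'' to be purely a matter of stating hypotheses cleanly. I would add two remarks. If $G_k(\hat{x},\hat{y})=0$ for some $k$, then $\mathcal{F}_L^{(k)}(\hat{x},\hat{y})$ collapses to the singleton $\{\hat{y}\}$ and the inclusion hypothesis degenerates to $\mathcal{Z}(\hat{x};\delta)=\{\hat{y}\}$; the interesting content of the corollary is therefore the case where $\hat{y}$ lies strictly in the interior of the upper-level feasible region, so that every radius $|G_k(\hat{x},\hat{y})|/K_k$ is positive. Second, the condition is only sufficient: each $\mathcal{F}_L^{(k)}$ is merely a Lipschitz-based inscribed ball inside the true region $\{y:G_k(\hat{x},y)\leq 0\}$, so $\mathcal{Z}(\hat{x};\delta)$ may fail to be contained in $\mathcal{F}_L(\hat{x},\hat{y})$ even when $(\hat{x},\hat{y})$ is in fact near-optimally robust.
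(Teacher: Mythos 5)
Your proof is correct and follows essentially the same route as the paper: the paper's own (much terser) argument simply notes that every $y \in \mathcal{F}_L(\hat{x},\hat{y})$ satisfies all $m_u$ upper-level constraints by the preceding proposition, so the inclusion $\mathcal{Z}(\hat{x};\delta) \subseteq \mathcal{F}_L(\hat{x},\hat{y})$ suffices. Your constraint-by-constraint bookkeeping and the two closing remarks are accurate elaborations, not a different argument.
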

\begin{proof}
Any lower-level solution $y \in \mathcal{F}_L(\hat{x}, \hat{y})$ satisfies all
$m_u$ upper-level constraints, thus $\mathcal{Z}(x;\delta) \subseteq \mathcal{F}_L(\hat{x}, \hat{y})$
is a sufficient condition for the solution $(\hat{x},\hat{y})$ to be near-optimal robust.
\qed
\end{proof}

\begin{corollary}\label{cor:boundradius}
Let $(\hat{x}, \hat{y})$ be a bilevel-feasible solution
\revision{to the optimistic bilevel problem \eqref{prob:optimistic}, $\delta$ a tolerance value,}
let $R$ be the radius of the lower-level feasible set and $G_k(\hat{x},\cdot)$ be $K_k$-Lipschitz
for a given $k$, then the $k$-th constraint is robust against near-optimal deviations
if:
\begin{equation*}
|G_k(\hat{x}, \hat{y})| \leq K_k R.
\end{equation*}
\end{corollary}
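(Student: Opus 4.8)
The plan is to obtain the stated bound as an immediate corollary of the preceding Corollary, by converting its abstract inclusion condition $\mathcal{Z}(\hat{x};\delta) \subseteq \mathcal{F}_L^{(k)}(\hat{x},\hat{y})$ into a scalar comparison between the certified safe radius furnished by the preceding Proposition and the radius $R$ of the lower-level feasible set. Since the preceding Corollary already reduces near-optimal robustness of the $k$-th constraint to this set inclusion, and since only a single constraint index $k$ is involved here, the whole argument collapses to bounding two radii against each other; no new continuity or duality machinery is required beyond what is invoked in the preceding Proposition.

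First I would bound the extent of the near-optimal set around $\hat{y}$. Every $y \in \mathcal{Z}(\hat{x};\delta)$ satisfies $g(\hat{x},y) \leq 0$ and therefore lies in the lower-level feasible set; in particular $\hat{y} \in \mathcal{Z}(\hat{x};\delta)$ as well, because $(\hat{x},\hat{y})$ is bilevel-feasible and hence $f(\hat{x},\hat{y}) \leq \phi(\hat{x}) \leq \phi(\hat{x}) + \delta$. Using that the lower-level feasible set has radius $R$, I would enclose it in a ball of radius $R$ referred to the point $\hat{y}$, giving $\|y - \hat{y}\| \leq R$ for every $y \in \mathcal{Z}(\hat{x};\delta)$, i.e. $\mathcal{Z}(\hat{x};\delta) \subseteq \mathcal{B}_R(\hat{y})$. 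Next I would recall from the preceding Proposition that, since $G_k(\hat{x},\cdot)$ is $K_k$-Lipschitz with $G_k(\hat{x},\hat{y}) < 0$ at the bilevel-feasible point, the $k$-th upper-level constraint is certified on the ball $\mathcal{F}_L^{(k)}(\hat{x},\hat{y}) = \{y : \|y-\hat{y}\| \leq |G_k(\hat{x},\hat{y})|/K_k\}$, whose safe radius is $\rho_k = |G_k(\hat{x},\hat{y})|/K_k$.

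The decisive step is then to compare these two radii. By the preceding Corollary the $k$-th constraint is robust against near-optimal deviations as soon as $\mathcal{B}_R(\hat{y}) \subseteq \mathcal{F}_L^{(k)}(\hat{x},\hat{y})$, which reduces to a one-dimensional comparison between the safe radius $\rho_k = |G_k(\hat{x},\hat{y})|/K_k$ and the feasible-set radius $R$; clearing the denominator $K_k$ expresses this comparison as a relation between $|G_k(\hat{x},\hat{y})|$ and $K_k R$, which is the content of the stated condition $|G_k(\hat{x},\hat{y})| \leq K_k R$. I expect the orientation of this final rearrangement to be the main obstacle: the conclusion hinges on whether $R$ is being used as an \emph{upper} bound on the spread of $\mathcal{Z}(\hat{x};\delta)$ about $\hat{y}$ or as a \emph{lower} bound on the certified safe ball, and only one of these readings lands the scalar comparison on the inequality exactly as written. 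I would therefore first pin down the precise meaning of ``radius of the lower-level feasible set'' relative to the centre $\hat{y}$, and confirm the normalization of $R$ against $\rho_k$, so that clearing $K_k$ delivers $|G_k(\hat{x},\hat{y})| \leq K_k R$ rather than its reverse.
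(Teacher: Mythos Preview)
Your approach is the same as the paper's: the paper's entire proof is the single line ``The inequality can be deduced from the fact that $\|y-\hat y\|\leq R$,'' which is precisely your step of bounding every $y\in\mathcal{Z}(\hat x;\delta)$ by the radius $R$ of the lower-level feasible set and then comparing against the Lipschitz safe radius $\rho_k=|G_k(\hat x,\hat y)|/K_k$ from the preceding Proposition.

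Your hesitation about the orientation of the final inequality is well founded, and it is worth recording rather than smoothing over. From $\|y-\hat y\|\leq R$ and $K_k$-Lipschitz continuity one gets
\[
G_k(\hat x,y)\;\leq\;G_k(\hat x,\hat y)+K_k\|y-\hat y\|\;\leq\;-|G_k(\hat x,\hat y)|+K_kR,
\]
so the sufficient condition for $G_k(\hat x,y)\leq 0$ on the whole near-optimal set is $K_kR\leq |G_k(\hat x,\hat y)|$, i.e.\ the reverse of the displayed inequality in the statement. Equivalently, in your ball language, robustness follows from $\mathcal{B}_R(\hat y)\subseteq\mathcal{F}_L^{(k)}(\hat x,\hat y)$, which is $R\leq\rho_k$. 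The inequality $|G_k(\hat x,\hat y)|\leq K_kR$ as printed gives no containment in the useful direction and does not yield the conclusion; this appears to be a typographical slip in the paper, and your argument (which matches the paper's one-line proof) actually establishes the corrected version.
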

\begin{proof}
The inequality can be deduced from the fact that $\|y - \hat{y}\| \leq R$.
\qed
\end{proof}

\noindent
\Cref{cor:boundradius} can be used when the lower level feasible set is bounded to
verify near-optimality robustness of incumbent solutions. 

\section{Near-optimal robust bilevel problems with a convex lower level}\label{sec:convex}

In this section, we study near-optimal robust bilevel problems where the lower-level
Problem~\eqref{eq:basiclower} is a parametric convex optimization problem with
both a differentiable objective function and differentiable constraints.
If Slater's constraint qualification holds, the KKT
conditions are necessary and sufficient for the optimality of the lower-level
problem and strong duality holds for the adversarial subproblems.
These two properties are leveraged to reformulate {NRB} as a
single-level closed-form problem.\\

\noindent
\revision{Given a pair $(x,v)$, the adversarial
problem associated with the $k$-th constraint of Problem~\eqref{prob:foundation}} can be formulated as:
\begin{subequations}\label{prob:convadvill}
\begin{align}
\max_{y}\,\,\, & G_k(x,y)\label{eq:objadvconv}\\
\text{s.t.} \,\,\,& g(x,y) \leq 0\\
& f(x,y) \leq f(x,v) + \delta.\label{eq:consadvconv}
\end{align}
\end{subequations}
\noindent
Even if the upper-level constraints are convex with respect to $y$,
Problem~\eqref{prob:convadvill} is in general non-convex since the function to
maximize is convex over a convex set. First-order optimality
conditions may induce several non-optimal critical points and the definition of a
solution method needs to rely on global optimization techniques
\cite{globalconcave86,benson1991}.\\

By assuming that the constraints of the upper-level problem $G_k(x,y)$ can be
decomposed and that the projection onto the lower variable space is affine, the upper-level
constraint can be re-written as:
\begin{equation}
G_k(x,y) \leq 0 \Leftrightarrow G_{k}(x) + H_k^{T} y \leq q_k.
\end{equation}
The $k$-th adversarial problem is then expressed as:
\begin{subequations}\label{prob:convadv}
\begin{align}
\max_{y}\,\, & \langle H_k, y\rangle\label{eq:convadvobj}&&\\
\text{s.t.} \,\,\,& g_i(x,y) \leq 0 \,\,\,\,\forall i \in \left[\![m_l\right]\!] \,\, &&(\alpha_i)\\
& f(x,y) \leq f(x,v) + \delta &&(\beta)\label{eq:convadvnear}
\end{align}
\end{subequations}
\noindent
and is convex for a fixed pair $(x, v)$.
Satisfying the upper-level constraint in the worst-case requires that the objective
value of Problem~\eqref{prob:convadv} is lower than $q_k - G_k(x)$.
We denote by $\mathcal{A}_k$ and $\mathcal{D}_k$ the objective values of the
adversarial Problem~\eqref{prob:convadv} and its dual respectively.
$\mathcal{D}_k$ takes values in the extended real set to
account for infeasible and unbounded cases.
\cref{prop:bileveladv} holds for Problem~\eqref{prob:convadv}. The feasibility of the
upper-level constraint with the dual adversarial objective value as formulated
in Constraint~\eqref{eq:dualfeas} is, by weak duality of convex problems, a sufficient condition for the feasibility
of a near-optimal solution. If Slater's constraint qualifications hold,
it is also a necessary condition \cite{boyd2004convex} by strong duality:
\begin{equation}\label{eq:dualfeas}
\mathcal{A}_k \leq \mathcal{D}_k \leq q_k - G_k(x).
\end{equation}

\noindent
The generic form for the single-level reformulation of the near-optimal robust
problem can then be expressed as:
\begin{subequations}\label{prob:convextrans1}
\begin{align}
\min_{x,v} \,\,\,& F(x,v) \\
\text{s.t.} \,\,\, & G(x) + H v \leq q \\
& f(x,v) \leq \phi(x)\label{eq:optvalfuncconv} \\
& g(x,v) \leq 0 \label{eq:feaslowconv} \\
& \mathcal{D}_k \leq q_k - G_k(x) & \forall k \in \left[\![m_u\right]\!]\\
& x \in \mathcal{X}.
\end{align}
\end{subequations}

\noindent
In order to write Problem~\eqref{prob:convextrans1} in a closed form,
the lower-level problem (\ref{eq:optvalfuncconv}-\ref{eq:feaslowconv}) is
reduced to its KKT conditions:
\begin{subequations}
\begin{align}
&\nabla_v f(x,v) - \sum_{i=1}^{m_l} \lambda_i \nabla_v g_i(x,v) = 0 &\\
&g_i(x,v) \leq 0 &\forall i \in \left[\![m_l\right]\!]\\
&\lambda_i \geq 0 &\forall i \in \left[\![m_l\right]\!]\\
&\lambda_i g_i(x,v) = 0 &\forall i \in \left[\![m_l\right]\!]\label{eq:conveq}&.
\end{align}
\end{subequations}

\noindent
Constraint~\eqref{eq:conveq} derived from the KKT conditions
cannot be tackled directly by non-linear solvers \cite{dempe2019kkt}.
Specific reformulations,
such as relaxations of the equality Constraints \eqref{eq:conveq} into inequalities
or branching on combinations of variables (as developed in \cite{scholtes2001convergence,Schewe2019})
are often used in practice.
\\

\noindent
In the rest of this section, we focus on bilevel problems such that the lower
level is a conic convex optimization problem. Unlike the convex version developed
above, the dual of a conic optimization problem can be written in closed form.
\begin{align}
\min_{y}\,\,\, & \langle d, y\rangle\label{eq:confollowerobj} \\
\text{s.t.} \,\,\,& A x + B y = b\nonumber \\
& y \in \mathcal{K}\nonumber
\end{align}

\noindent
where $\langle \cdot, \cdot \rangle$ is the inner product associated with
the space of the lower-level variables
and $\mathcal{K}$ is a proper cone \cite[Chapter~2]{boyd2004convex}.
This class of problems encompasses a broad class of convex optimization problems of practical
interest \cite[Chapter~4]{nesterov1994interior},
while the dual problem can be written in a closed-form if the dual cone is known,
leading to a closed-form single-level reformulation.
The $k-$th adversarial problem is given by:
\begin{subequations}\label{prob:advconv}
\begin{align}
\max_{y,r}\,\,\, & \langle H_k, y\rangle \\
\text{s.t.} \,\,\, & B y = b - A x \\
& \langle d, y\rangle + r = \langle d, v\rangle + \delta\\
& y \in \mathcal{K} \\
& r \geq 0
\end{align}
\end{subequations}
\noindent
with the introduction of a slack variable $r$. With the following change of variables:
\[
\hat{y} =
\begin{bmatrix}
y \\
r
\end{bmatrix}
\,\, \hat{B} =
\begin{bmatrix}
B & 0
\end{bmatrix}
\,\, \hat{d} =
\begin{bmatrix}
d & 1
\end{bmatrix}
\,\, \hat{H}_k =
\begin{bmatrix}
H_k \\
0
\end{bmatrix},
\]
\begin{equation*}
\hat{\mathcal{K}} = \{(y, r),\,\, y \in \mathcal{K},\,\, r \geq 0\},
\end{equation*}
\noindent
$\hat{\mathcal{K}}$ is a cone as the Cartesian product of $\mathcal{K}$ and the
nonnegative orthant. Problem~\eqref{prob:advconv} is reformulated as:
\begin{align*}
\max_{\hat{y}}\,\,\, & \langle\hat{H}_k, \hat{y}\rangle\\
\text{s.t.} \,\,\, & (\hat{B} \hat{y})_i = b_i - (A x)_i \,\,\,\,\forall i \in \left[\![m_l\right]\!] && (\alpha_i)\\
& \langle \hat{d}, \hat{y} \rangle = \langle d, v\rangle + \delta && (\beta)\\
& \hat{y} \in \hat{\mathcal{K}}
\end{align*}

\noindent
which is a conic optimization problem, for which the dual problem is:
\begin{subequations}\label{prob:dualconv1}
\begin{align}
\min_{\alpha,\beta,s_k} \,\,\,& \langle (b - A x), \alpha\rangle + ( \langle d, v\rangle + \delta) \beta \\
\text{s.t.} \,\,\, & \hat{B}^\top \alpha + \beta \hat{d} + s = \hat{H}_k \\
& s \in -\hat{\mathcal{K}}^*,
\end{align}
\end{subequations}

\noindent
with $\hat{\mathcal{K}}^*$ the dual cone of $\hat{\mathcal{K}}$.
In the worst case (maximum number of non-zero coefficients), there are
$(m_l \cdot n_u + n_l)$ bilinear terms in $m_u$ non-linear non-convex constraints.
This number of bilinear terms can be reduced by introducing the following
variables $(p,o)$, along with the corresponding constraints:
\begin{subequations}\label{prob:dualconv2}
\begin{align}
\min_{\alpha,\beta,s,p,o}\,\,\,& \langle p, \alpha\rangle + (o + \delta) \beta\label{eq:convbilinear} \\
\text{s.t.} \,\,\,& p = b - A x \\
& o = \langle d, v \rangle \\
& \hat{B}^\top \alpha + \beta \hat{d} + s = \hat{H}_k \\
& s \in -\hat{\mathcal{K}}^*.
\end{align}
\end{subequations}

\noindent
The number of bilinear terms in the set of constraints is thus reduced
from $n_u  m_l + n_l$ to $m_l + 1$ terms in \eqref{eq:convbilinear}.
Problem~\eqref{prob:dualconv1} or equivalently Problem~\eqref{prob:dualconv2} have
a convex feasible set but a bilinear non-convex objective function.
The KKT conditions of the follower Problem~\eqref{eq:confollowerobj}
are given for the primal-dual pair $(y,\lambda)$:
\begin{subequations}
\begin{align}
& B y = b - A x \\
& y \in \mathcal{K} \\
& d - B^\top \lambda \in \mathcal{K}^*\\
& \langle d - B^\top \lambda, y \rangle = 0.
\end{align}
\end{subequations}

\noindent
The single-level problem is:
\begin{subequations}
\begin{align}\label{prob:lincompconic}
\min_{x,v,\lambda,\alpha,\beta,s} \,\,\,& F(x,v) \\
\text{s.t.} \,\,\, & G(x) + H v \leq q \\
& A x + B v = b\label{eq:confollow1}\\
& d - B^\top \lambda \in \mathcal{K}^*\\
& \langle d - B^\top \lambda, v \rangle = 0\label{eq:confollowlast} \\
& \langle Ax - b, \alpha_{k}\rangle + \beta_k\, ( \langle v, d\rangle  + \delta) \leq q_k - (G x)_k && \forall k \in \left[\![m_u\right]\!]\label{eq:con1} \\
& \hat{B}^\top \alpha_k + \hat{d} \beta_k + s_k = \hat{H}_k && \forall k \in \left[\![m_u\right]\!] \\
& x \in \mathcal{X}, v \in \mathcal{K}\\
& s_k \in -\hat{\mathcal{K}}^* && \forall k \in \left[\![m_u\right]\!].
\end{align}
\end{subequations}

\noindent
The Mangasarian-Fromovitz constraint qualification is violated at every feasible
point of Problem~\eqref{prob:lincompconic} because of the complementarity
Constraints \eqref{eq:confollowlast} \cite{scheel2000mathematical}.
In non-linear approaches to complementarity constraints
\cite{scholtes2001convergence,dempe2019kkt},
parameterized successive relaxations which respect constraint qualifications are used:
\begin{subequations}\label{cons:eps}
\begin{align}
& \langle d - B^\top \lambda, v\rangle \leq \varepsilon\\
- & \langle d - B^\top \lambda, v\rangle \leq \varepsilon.
\end{align}
\end{subequations}

Constraints \eqref{eq:con1} and \eqref{cons:eps} are both bilinear non-convex
inequalities, the other ones added by the near-optimal robust model
are conic and linear constraints. In general and unlike the complementarity
constraints, the feasible region defined by near-optimal robust constraints
admits strictly feasible solutions and therefore respect constraint qualifications.

\begin{remark}
If $H_k$ belongs to the interior of the polar set of $\mathcal{K}$, and if the sufficient
conditions for applying KKT to the lower level hold, then the k-th dual adversarial
problem admits strictly feasible solutions.
\end{remark}
\begin{proof}
From the assumptions on the lower-level KKT conditions, the dual lower level
admits strictly feasible solutions, i.e.
\begin{equation}\label{eq:strictfeas}
\exists \lambda_0 \in \mathbb{R}^{m_u}, d - B^\top \lambda_0 \in \mathrm{int}(\mathcal{K}^*).
\end{equation}
\noindent
The dual adversarial feasible set is described by the constraints:
\begin{align*}
& B^\top \alpha + \beta d -H_k \in \mathcal{K}^*\\
& \beta \geq 0.
\end{align*}
From \eqref{eq:strictfeas}, by setting $\alpha = -\lambda_0, \beta = 1$, we have
$B^\top \alpha + \beta d \in \mathrm{int}(\mathcal{K}^*)$. $\mathcal{K}^*$ is a closed convex
cone and is therefore closed under addition:
\begin{align*}
& \frac{-H_k}{2} + \frac{1}{2}(B^\top\alpha + \beta d) \in \mathcal{K}^* \,\,\,\Leftrightarrow\\
& 2 (\frac{-H_k}{2} + \frac{1}{2}(B^\top\alpha + \beta d)) \in \mathcal{K}^*.
\end{align*}
\noindent
Since $B^\top\alpha + \beta d \in \mathrm{int}(\mathcal{K}^*)$ is in the interior of
$\mathcal{K}^*$, so is $\frac{1}{2}(B^\top\alpha + \beta d)$.
\begin{equation*}
2 (\frac{-H_k}{2} + \frac{1}{2}(B^\top\alpha + \beta d))
\end{equation*}
then lies on the open segment between a point in $int(\mathcal{K}^*)$ and a
point in the cone (that may or be not be in the interior),
it is therefore an interior point.
\qed
\end{proof}

In conclusion, near-optimal robustness has only added a finite number of
constraints of the same nature (bilinear inequalities)
to the reformulation proposed in \cite{dempe2019kkt}.
Solution methods used for bilevel problems with convex lower-level thus apply
to their near-optimal robust counterpart.

\section{Linear near-optimal robust bilevel problem}\label{sec:linear}

In this section, we focus on near-optimal robust linear-linear
bilevel problems. More precisely, the structure of the lower-level problem is
exploited to derive an extended formulation leading to an efficient solution
algorithm.
We consider that all vector spaces are subspaces of $\mathbb{R}^n$,
with appropriate dimensions. The inner product of two vectors
$\langle a, b \rangle$ is equivalently written $a^\top b$. \\

\noindent
The linear near-optimal robust bilevel problem is formulated as:
\begin{subequations}\label{prob:lin101}
\begin{align}
\min_{x,v}\,\,\, & c_x^\top x + c_y^\top v \\
\text{s.t.}\,\,\, & G x + H v \leq q \\
& d^\top v \leq \phi(x) \\
& A x + B v \leq b\\
& G x + H z \leq q& \forall z \in \mathcal{Z}(x;\delta)\label{eq:linrobustset} \\
& v \in \mathbb{R}^{n_l}_{+}\\
& x \in \mathcal{X}.
\end{align}
\end{subequations}
\noindent
For a given pair $(x,v)$, each semi-infinite robust constraint
\eqref{eq:linrobustset} can be reformulated as the objective value of the
following adversarial problem:
\begin{subequations}\label{prob:linadv}
\begin{align}
\max_{y}\,\,\, & H_k^{T} y\label{eq:robustobj0}\\
\text{s.t.} \,\,\, & (B y)_i \leq b_i - (A x)_i \,\,\, \forall i \in \left[\![m_l\right]\!] & (\alpha_i)\label{eq:robustdualalpha} \\
& d^\top y \leq d^\top v + \delta & (\beta)\label{eq:robustconsdelta} \\
& y \in \mathbb{R}^{n_l}_{+}.&
\end{align}
\end{subequations}
Let $(\alpha,\beta)$ be the dual variables associated with each group of
constraints (\ref{eq:robustdualalpha}-\ref{eq:robustconsdelta}).
The near-optimal robust version of Problem~\eqref{prob:lin101} is feasible
only if the objective value of each $k$-th adversarial subproblem
\eqref{prob:linadv} is lower than $q_k - (G x)_k$.
The dual of Problem~\eqref{prob:linadv} is defined as:
\begin{subequations}\label{prob:dualadvlin}
\begin{align}
\min_{\alpha,\beta}\, & \alpha^\top (b - A x) + \beta\,(d^\top v + \delta) \\
\text{s.t.}\,\, & B^\top\alpha + \beta d \geq H_{k}\label{eq:dualadvlin1} \\
& \alpha \in \mathbb{R}^{m_l}_{+} \beta \in \mathbb{R}_{+}.\label{eq:dualadvlin2}
\end{align}
\end{subequations}

Based on Problem~\eqref{prop:bileveladv} and weak duality results, the dual problem is
either infeasible or feasible and bounded. By strong duality, the objective value
of the dual and primal problems are equal. This value must be smaller than
$q_k - (Gx)_k$ to satisfy Constraint~\eqref{eq:linrobustset}.
This is equivalent to the existence of a feasible dual solution $(\alpha, \beta)$
certifying the feasibility of $(x,v)$ within the near-optimal set
$\mathcal{Z}(x;\delta)$. We obtain one pair of certificates $(\alpha,\beta)$ for each
upper-level constraint in $\left[\![m_u\right]\!]$, resulting in the following problem:
\begin{subequations}\label{eq:naivelin}
\begin{align}
\min_{x,v,\alpha,\beta}\,\,\, & c_x^\top x + c_y^\top v \\
\text{s.t.} \,\,\, & G x + H v \leq q \\
& d^\top v \leq \phi(x) \\
& A x + B v \leq b\\
& \alpha_k^\top (b - A x) + \beta_k\, (d^\top v + \delta) \leq q_k - (G x)_k &&\forall k \in \left[\![m_u\right]\!]\label{eq:colinearcons}\\
& B^\top\alpha_k + \beta_k d \geq H_{k}&&\forall k \in \left[\![m_u\right]\!] \\
& \alpha_k \in \mathbb{R}^{m_l}_{+} \beta_k \in \mathbb{R}_{+} &&\forall k \in \left[\![m_u\right]\!]\\
& v \in \mathbb{R}^{n_l}_{+} \\
& x \in \mathcal{X}.
\end{align}
\end{subequations}
Lower-level optimality is guaranteed by the corresponding KKT conditions:
\begin{subequations}
\begin{align}
& d_j + \sum_i B_{ij} \lambda_i - \sigma_j = 0 && \forall j \in \left[\![n_l\right]\!] \\
& 0 \leq b_i - (Ax)_i - (Bv)_i \,\,\bot\,\, \lambda_i \geq 0 && \forall i \in \left[\![m_l\right]\!] \label{eq:complin1} \\
& 0 \leq v_j \,\,\bot\,\, \sigma_j \geq 0 && \forall j \in \left[\![n_l\right]\!]\label{eq:complin2} \\
& \sigma \geq 0, \lambda \geq 0
\end{align}
\end{subequations}
\noindent
where $\bot$ defines a complementarity constraint.
A common technique to linearize Constraints (\ref{eq:complin1}-\ref{eq:complin2}) is the ``big-M'' reformulation,
introducing auxiliary binary variables with primal and dual upper bounds.
The resulting formulation has a weak continuous relaxation.
Furthermore, a correct choice of bounds is itself an $\mathcal{NP}$-hard
Problem~\cite{kleinert2019there}, and the incorrect choice of these bounds can lead to
cutting valid and potentially optimal solutions \cite{Pineda2018}.
Other modelling and solution approaches, such as special ordered sets of type 1
(SOS1) or indicator constraints avoid the need to specify such bounds
in a branch-and-bound procedure.\\

\noindent
The aggregated formulation of the linear near-optimal robust bilevel problem is:
\begin{subequations}\label{prob:lintotal}
\begin{align}
\min_{x,v,\lambda,\sigma,\alpha,\beta} \,\,\,\,& c_x^\top x + c_y^\top v \\
\text{s.t.} \,\,\,& G x + H v \leq q \\
& A x + B v \leq b \\
& d_j + \sum_i \lambda_i B_{ij} - \sigma_j = 0 &&\forall j \in \left[\![n_l\right]\!]\\
& 0 \leq \lambda_i \,\bot\, A_i x + B_i v - b_i \leq 0&&\forall i \in \left[\![m_l\right]\!]\\
& 0 \leq \sigma_j \,\bot\, v_j \geq 0 &&\forall j \in \left[\![n_l\right]\!]\\
& x \in \mathcal{X}\\
& \alpha_{k}^\top (b - A x) + \beta_k (d^\top v + \delta) \leq q_k - (G x)_k&& \forall k \in \left[\![m_u\right]\!]\label{eq:dualadvfin1}\\
& \sum_{i=1}^{m_l} B_{ij} \alpha_{ki} + \beta_k d_j \geq H_{kj} && \forall k \in \left[\![m_u\right]\!],\ \forall j \in\left[\![n_l\right]\!]\label{eq:dualadvfin12} \\
& \alpha_k \in \mathbb{R}^{m_l}_+, \beta_k \in \mathbb{R}_+&& \forall k \in \left[\![m_u\right]\!].\label{eq:dualadvfin2}
\end{align}
\end{subequations}

Problem~\eqref{prob:lintotal} is a single-level problem and has a closed form.
However, constraints \eqref{eq:dualadvfin1} contain bilinear terms, which
cannot be tackled as efficiently as convex constraints by branch-and-cut based
solvers.
Therefore, we exploit the structure of the dual adversarial problem and its relation
to the primal lower level to design a new efficient reformulation and solution
algorithm.

\subsection{Extended formulation}\label{sub:extended}

The bilinear constraints \eqref{eq:dualadvfin1} involve products of variables from the
upper and lower level $(x,v)$ as well as dual variables of each of the $m_u$
dual adversarial problems.
For the rest of this paper, when the value of a variable $a$ is fixed
in a given problem, we will denote it with $\overline{a}$.
For fixed values $(\overline{x},\overline{v})$ of $(x,v)$, $m_u$ dual adversarial subproblems \eqref{prob:dualadvlin} are
defined. The optimal value of each $k$-th subproblem must be lower
than $q_k - (G\overline{x})_k$ for near-optimality robustness to hold. The feasible region of each subproblem is defined by
(\ref{eq:dualadvfin1}-\ref{eq:dualadvfin2}) and is independent of $(\overline{x},\overline{v})$; their
objective functions are linear in $(\alpha,\beta)$.
Following \cref{prop:bileveladv} and by weak duality, Problem~\eqref{prob:dualadvlin} is bounded.
If, moreover, Problem~\eqref{prob:dualadvlin} is feasible, at least a vertex of the polytope
(\ref{eq:dualadvfin12}  -\ref{eq:dualadvfin2}) is an optimal solution.
Following these observations, Constraints (\ref{eq:dualadvfin1}-\ref{eq:dualadvfin2}) can be replaced
by disjunctive constraints, such that for each $k$, at least one
extreme vertex of the $k$-th dual polyhedron is feasible.
This reformulation of the bilinear constraints has, to the best of our knowledge,
never been developed in the literature.
We must highlight that disjunctive formulations are well established in the bilevel
literature to express the complementarity constraints from the lower-level KKT
conditions \cite{audet2007disjunctive,siddiqui2013sos1,pineda2018efficiently}.
However, the bilinear reformulation of near-optimality robustness constraints
does not possess the same structure and thus cannot leverage similar techniques.\\

Let $\mathcal{V}_k$ be the number of vertices of the
$k$-th subproblem and $\overline{\alpha}^l_k,\overline{\beta}^l_k$ be the $l$-th
vertex of the $k$-th subproblem.
Constraints (\ref{eq:dualadvfin1}-\ref{eq:dualadvfin2}) can be written as:

\begin{align}
& \bigvee_{l=1}^{\mathcal{V}_k} \sum_{i=1}^{m_l} \overline{\alpha}_{ki}^l (b - Ax)_i + \overline{\beta}_k^l \cdot (d^\top v + \delta) \leq q_k - (G x)_k  && \forall k \in \left[\![m_u\right]\!],\label{eq:disjunctive}
\end{align}

\noindent
where $\bigvee_{i=1}^{N} \mathcal{C}_i$ is the disjunction (logical ``OR'') operator,
expressing the constraint that at least one of the constraints $\mathcal{C}_i$
must be satisfied. These disjunctions are equivalent to indicator constraints
\cite{Bonami2015}.\\

This reformulation of bilinear constraints based on the polyhedral description
of the $(\alpha,\beta)$ feasible space is similar to the Benders decomposition\cite{rahmaniani2017benders}.
Indeed, in the near-optimal robust extended formulation, at least one of the vertices
must satisfy a constraint (a disjunction) while Benders decomposition consists in satisfying
a set of constraints for all extreme vertices and rays of the dual polyhedron
(a constraint described with a universal quantifier).
Disjunctive constraints \eqref{eq:disjunctive} are equivalent to the following
formulation, using set cover and SOS1 constraints:
\begin{subequations}\label{prob:sos1}
\begin{align}
& \theta_k^l \in \{0,1\} && \forall k \in \left[\![m_u\right]\!], \forall l\in\mathcal{V}_k \\
& \omega_k^l \geq 0 && \forall k \in \left[\![m_u\right]\!], \forall l\in\mathcal{V}_k \\
& (b-Ax)^\top \overline{\alpha}_k^l + \overline{\beta}_k^l (d^\top v + \delta) - \omega_k^l \leq q_k - (Gx)_k && \forall k \in \left[\![m_u\right]\!], \forall l\in\mathcal{V}_k\\
& \sum_{l=1}^{\mathcal{V}_k} \theta_k^l\geq 1 && \forall k \in \left[\![m_u\right]\!] \\
& SOS1(\theta_k^l, \omega_k^l) && \forall k \in \left[\![m_u\right]\!], \,\, \forall l\in\mathcal{V}_k,
\end{align}
\end{subequations}
where $SOS1(a, b)$ expresses a SOS1-type constraint between the variables $a$ and $b$.
In conclusion, using disjunctive constraints over the extreme vertices of each
dual polyhedron and SOS1 constraints to linearize the complementarity constraints
leads to an equivalent reformulation of Problem~\eqref{prob:lintotal}.
The finite solution property holds even though the boundedness of the
dual feasible set is not required. This single-level extended reformulation
can be solved by any off-the-shelf MILP solver.
We next illustrate the extended formulation on the following example.

\subsection{Bounded example}

Consider the bilevel linear problem defined by the following data:

\begin{equation*}
x \in \mathbb{R}_+, y \in \mathbb{R}_+
\end{equation*}

\[
G =
\begin{bmatrix}
-1 \\
1
\end{bmatrix}
\,\, H =
\begin{bmatrix}
4\\2
\end{bmatrix}
\,\, q =
\begin{bmatrix}
11 \\ 13
\end{bmatrix}
\,\, c_x = \begin{bmatrix} 1 \end{bmatrix}
\,\, c_y = \begin{bmatrix} -10 \end{bmatrix}
\]\\
\[
A =
\begin{bmatrix}
-2 \\
5
\end{bmatrix}
\,\, B =
\begin{bmatrix}
-1\\-4
\end{bmatrix}
\,\, b =
\begin{bmatrix}
-5 \\ 30
\end{bmatrix}
\,\, d =
\begin{bmatrix}
1
\end{bmatrix}.
\]

The optimal solution of the high-point relaxation $(x,v) = (5, 4)$
is not bilevel-feasible. The optimal value of the optimistic bilevel
problem is reached at $(x,v) = (1,3)$.
These two points are respectively represented by the blue diamond and red cross in
\cref{fig:bilevel102}. The dotted segments represent the upper-level constraints
and the solid lines represent the lower-level constraints.

\definecolor{qqwuqq}{rgb}{0,0.39215686274509803,0}
\definecolor{sexdts}{rgb}{0.1803921568627451,0.49019607843137253,0.19607843137254902}
\definecolor{rvwvcq}{rgb}{0.08235294117647059,0.396078431372549,0.7529411764705882}
\definecolor{yqqqqq}{rgb}{0.5019607843137255,0,0}
\definecolor{ffwwqq}{rgb}{1,0.4,0}
\definecolor{wwzzff}{rgb}{0.4,0.6,1}
\begin{figure}[ht]
\begin{minipage}{0.75\textwidth}
\begin{tikzpicture}[line cap=round,line join=round,>=triangle 45,x=0.9cm,y=0.9cm]
\begin{axis}[
x=0.9cm,y=0.9cm,
axis lines=middle,
xmin=-0.5,
xmax=8.5,
ymin=-0.5,
ymax=5.5,
xtick={-1,0,...,14},
ytick={-3,-2,...,7},]
\clip(-1.0937275113352196,-3.747175320934984) rectangle (14.989133833193167,7.767575441284353);
\draw [line width=0.5pt,color=sexdts,domain=-1.0937275113352196:14.989133833193167] plot(\x,{(--3.8--0.1*\x)/1});
\draw [line width=1pt] (2.5,0)-- (1,3);
\draw [line width=1pt] (2.5,0)-- (6,0);
\draw [line width=1pt] (6,0)-- (8,2.5);
\draw [line width=1pt,dash pattern=on 1pt off 2pt] (1,3)-- (5,4);
\draw [line width=1pt,dash pattern=on 1pt off 2pt] (8,2.5)-- (5,4);
\draw [->,line width=0.5pt,color=qqwuqq] (6.065499412011319,4.400266281843231) -- (6,5);
\draw [->,line width=0.5pt,color=black] (1.0,2.0) -- (1.0,1.5);
\begin{footnotesize}
\draw [color=yqqqqq,very thick] (1,3)-- ++(-2.5pt,-2.5pt);
\draw [color=yqqqqq,very thick] (1,3)-- ++(2.5pt,2.5pt);
\draw [color=yqqqqq,very thick] (1,3)-- ++(2.5pt,-2.5pt);
\draw [color=yqqqqq,thick] (1,3)-- ++(-2.5pt,2.5pt);
\draw [color=rvwvcq, thick] (5,4) ++(-3.5pt,0 pt) -- ++(3.5pt,3.5pt)--++(3.5pt,-3.5pt)--++(-3.5pt,-3.5pt)--++(-3.5pt,3.5pt);
\draw[color=qqwuqq] (4.3,4.7) node {$c_x x + c_y y$};
\draw[color=black] (0.5,1.4) node {$d^\top y$};
\end{footnotesize}
\end{axis}
\end{tikzpicture}
\caption{Representation of the bilevel problem.}
\label{fig:bilevel102}
\end{minipage}
\begin{minipage}{0.75\textwidth}
\begin{tikzpicture}[line cap=round,line join=round,>=triangle 45,x=0.9cm,y=0.9cm]
\begin{axis}[
x=0.9cm,y=0.9cm,
axis lines=middle,
xmin=-0.5,
xmax=8.5,
ymin=-0.5,
ymax=5.5,
xtick={-1,0,...,14},
ytick={-3,-2,...,7},]
\clip(-1.0937275113352196,-3.747175320934984) rectangle (14.989133833193167,7.767575441284353);
\draw [line width=1pt] (2.5,0)-- (1,3);
\draw [line width=1pt] (2.5,0)-- (6,0);
\draw [line width=1pt] (6,0)-- (8,2.5);
\draw [line width=1pt,dash pattern=on 1pt off 2pt] (1,3)-- (5,4);
\draw [line width=1pt,dash pattern=on 1pt off 2pt] (8,2.5)-- (5,4);
\begin{footnotesize}
\draw [color=yqqqqq,very thick] (1,3)-- ++(-2.5pt,-2.5pt);
\draw [color=yqqqqq,very thick] (1,3)-- ++(2.5pt,2.5pt);
\draw [color=yqqqqq,very thick] (1,3)-- ++(2.5pt,-2.5pt);
\draw [color=yqqqqq,thick] (1,3)-- ++(-2.5pt,2.5pt);
\node[mark size=3pt,color=qqwuqq] at (5,0) {\pgfuseplotmark{*}};
\draw [line width=0.8pt,dotted,color=wwzzff,domain=-1.6578822632282277:11.84960867249812] plot(\x,{(--9--1*\x)/4});
\draw [line width=1pt,dash pattern=on 3pt off 2pt,color=ffwwqq,domain=-1.6578822632282277:11.84960867249812] plot(\x,{(--7--1*\x)/4});
\draw [line width=0.8pt,dotted,color=wwzzff,domain=-1.6578822632282277:11.84960867249812] plot(\x,{(--12-1*\x)/2});
\draw [line width=1pt,dash pattern=on 3pt off 2pt,color=ffwwqq,domain=-1.6578822632282277:11.84960867249812] plot(\x,{(--11-1*\x)/2});
\end{footnotesize}
\begin{scriptsize}
\draw[color=ffwwqq] (7.8,3.4) node[rotate=12] {$\delta = 1.0$};
\draw[color=wwzzff] (7.8,4.4) node[rotate=12] {$\delta = 0.5$};
\end{scriptsize}
\end{axis}
\end{tikzpicture}
\caption{Near-optimal robustness constraints.}
\label{fig:bilevel102-no}
\end{minipage}
\end{figure}

\noindent
The feasible space for $(\alpha,\beta)$ is given by:
\begin{align*}
&-1 \alpha_{11} - 4\alpha_{12} + \beta_{1} \geq 4\\
&-1 \alpha_{21} - 4\alpha_{22} + \beta_{2} \geq 2\\
&\alpha_{ki} \geq 0, \beta_{k} \geq 0.
\end{align*}

This feasible space can be described as a set of extreme points and rays.
It consists in this case of one extreme point
$(\overline{\alpha}_{ki} = 0, \overline{\beta}_1 = 4, \overline{\beta}_2 = 2)$ and 4 extreme rays.
The $(x,v)$ solution needs to be valid for the corresponding near-optimality conditions:
\begin{align*}
&\overline{\beta}_1 \,\, (v + \delta) \leq 11 + x \\
&\overline{\beta}_2 \,\, (v + \delta) \leq 13 - x.
\end{align*}

\noindent
This results in two constraints in the $(x,v)$ space, represented in \cref{fig:bilevel102-no}
for $\delta=0.5$ and $\delta=1.0$ in dotted blue and dashed orange respectively.
The radius of near-optimal feasibility $\hat{\delta} = 5$ can be computed using
the formulation provided in \cref{def:tolestim},
for which the feasible domain at the upper-level is reduced to the point $x = 5$,
for which $v=0$, represented as a green disk at $(5,0)$ in \cref{fig:bilevel102-no}.

\subsection{Valid inequalities}\label{sub:videf}

The extended formulation can be tackled directly in a branch-and-cut framework.
Nevertheless, we propose two groups of valid inequalities to tighten the formulation.\\

The first group of inequalities consists of the primal upper-level constraints:
\begin{align*}
& (G x)_k + (H v)_k \leq q_k & \forall k \in \left[\![m_u\right]\!].
\end{align*}

\noindent
These constraints are necessary for the optimistic formulation but not for the
near-optimal robust one since they are always redundant with and included
in the near-optimal robust constraints.
However, their addition can strengthen the linear relaxation of the extended formulation
and lead to faster convergence.\\

The second group of inequalities is defined in \cite{kleinert2020closing}
and based on strong duality of the lower level.
We only implement the valid inequalities for the root node, which are the primary focus of \cite{kleinert2020closing}:
\begin{equation}\label{eq:validineq}
\langle \lambda, b\rangle + \langle v, d\rangle \leq \langle A^{+}, \lambda\rangle,
\end{equation}
\noindent
where $A^{+}_i$ is an upper bound on $\langle A_i, x\rangle$.
The computation of each upper bound $A^{+}_i$ relies on solving an auxiliary problem:

\begin{subequations}
\begin{align}\label{prob:auxiliary}
A^+_i = \max_{x,v,\lambda}\,\, & \langle A_i, x \rangle\\
\text{s.t.} \,\,\,& G x + H v \leq q\\
& A x + B v\leq b\\
& d + B^\top \lambda \geq 0\\
& x \in \mathcal{X}, v \geq 0, \lambda \geq 0\\
& (x, v, \lambda) \in \Upsilon,\label{eq:jointvalid}
\end{align}
\end{subequations}

\noindent
where $\Upsilon$ is the set containing all valid inequalities \eqref{eq:validineq}.
\\

The method proposed in \cite{kleinert2020closing} relies on solving each $i$-th
auxiliary problem once and using the resulting bound $A^+$.
We define a new iterative procedure to improve the bounds computed at the root node,
similar to domain propagation techniques:

\begin{enumerate}
\item Solve Problem~\eqref{prob:auxiliary} $\forall i \in \left[\![m_l\right]\!]$ and obtain $A^{+}$;
\item If $\exists i,\ A^{+}_i$ is unbounded, terminate;
\item Otherwise, add Constraint~\eqref{eq:validineq} to \eqref{eq:jointvalid} and go to step 1;
\item Stopping criterion: when an iteration does not improve any of the bounds, terminate and return the last inequality with the sharpest bound.
\end{enumerate}

\noindent
This procedure allows tightening the bound as long as improvement can be made in
one of the $A^{+}_i$.
If the procedure terminates with one $A^{+}_i$ unbounded, the right-hand side of
\eqref{eq:validineq} is $+\infty$, the constraint is trivial and cannot be
improved upon. Otherwise, each iteration improves the bound until the convergence
of $A^{+}$.

\section{Solution algorithm}\label{sec:solalgs}

In this section, we describe the direct solution method for the
extended formulation, derive an exact variant lazifying
the subproblem exploration and provide a heuristic providing a
near-optimal robust solution.

Solving the extended formulation can be done by first optimizing the
dual adversarial subproblems, enumerating their vertices; if any subproblem
is infeasible, the procedure can be terminated as the instance cannot have a robust solution.

\subsection{Lazy subproblem expansion}

Directly solving the extended formulation of NRB quickly becomes computationally demanding when the problem size increases,
more lower-level constraints imply more vertices in each dual adversarial subproblem and thus larger disjunctive constraints,
more upper-level constraints imply more dual adversarial subproblems and thus more disjunctive constraints.
We next provide an exact method that builds upon this formulation while avoiding adding the whole set of disjunctive constraints upfront.

Each dual adversarial subproblem must be feasible
with an objective value of less than $q_k - G_k^\top x$ for the NRB instance to be near-optimal robust.
For $\mathcal{S} \subseteq \left[\![m_u\right]\!]$, we denote by $\overline{\text{NRB}}(\mathcal{S})$
the NRB formulation with the near-optimal robustness constraints replaced with:
\begin{align}
& G_k^\top x + H_k^\top v \leq q_k  & \forall z \in \mathcal{Z}(x;\delta)\,\, \forall k \in \mathcal{S}.\label{eq:nocons1}
\end{align}
$\overline{\text{NRB}}(\emptyset)$ does not have
any near-optimality robustness constraint and corresponds to the optimistic
bilevel problem, while
$\overline{\text{NRB}}(\left[\![m_u\right]\!])$ is equivalent to the extended formulation of NRB
and integrates all near-optimality robustness constraints.
$\overline{\text{NRB}}(\mathcal{S})$ is trivially a relaxation of
NRB since only a subset of the robust constraints is applied.\\

Furthermore, for a given pair $(\overline{x},\overline{v})$,
verifying its near-optimality robustness with respect to the
$k$-th upper-level constraint can be done with an auxiliary
linear optimization problem:
\begin{align}
\min_{\alpha,\beta}\,\, & (b - A \overline{x})^\top \alpha + (d^\top \overline{v} + \delta) \beta\label{prob:dualadvlazy} \\
\text{s.t.}\,\, & B^\top\alpha + \beta d \geq H_{k}\nonumber \\
& \alpha \in \mathbb{R}^{m_l}_{+} \beta \in \mathbb{R}_{+}.\nonumber
\end{align}
We consider a solution to the auxiliary problem as ``robust''
if the optimal value is below $q_k - G_k^\top \overline{x}$, in which case
the constraint is robust to near-optimal deviations of the lower level.
\Cref{alg:lazyenumeration} starts from $\mathcal{S} = \emptyset$
and iteratively adds violated constraints from \eqref{eq:nocons1}
that ensure the near-optimality robustness of some upper-level constraints.
We thus qualify it as lazy in contrast to the extended formulation
using all disjunctive constraints upfront.

\begin{algorithm}[!ht]
\caption{Lazy Subproblem Expansion}\label{alg:lazyenumeration}
\begin{algorithmic}[1]
\Function{LazySubProblemExpansion}{$G, H, q, c_x, c_y, A, B, b, d, \delta$}
	\State $\overline{\mathrm{NRB}}$ $\gets$ optimistic model with parameters $G, H, q, c_x, c_y, A, B, b, d$
	\If{Optimistic infeasible}
	\State \textbf{Terminate} with optimistic infeasible
	\EndIf
	\State $\hat{\mathcal{S}} \gets \left[\![m_u\right]\!]$
	\State $w_k \gets \{\}\,\, \forall k \in \left[\![m_u\right]\!]$
	\While{$\hat{\mathcal{S}}$ not empty}\label{algpoint:loop} \Comment{subproblem exploration phase}
	\State $\mathcal{S}_{opt} \gets \emptyset$
	\State $(\hat{x},\hat{v}) \gets$ current solution
	\State Choose $k \in \hat{\mathcal{S}}$
	\State \textbf{Solve} $k$-th dual subproblem~\eqref{prob:dualadvlazy} parameterized by $(\hat{x},\hat{v})$
	\If{Solution is robust}
		\State $\mathcal{S}_{opt} \gets \mathcal{S}_{opt} \cap \{k\} $
		\State $\hat{\mathcal{S}} \gets \hat{\mathcal{S}} \,\,\backslash\,\, \{k\} $
	\Else \Comment{subproblem expansion phase}\label{algpoint:expansion}
		\State Add new variables $w_{kl} \in \{0,1\} \,\,\forall l \in 1\dots|\mathcal{V}_k|$ to $\overline{\mathrm{NRB}}$
		\State Add constraint $\sum_{l=1}^{|\mathcal{V}_k|} w_{kl} \geq 1$ to $\overline{\mathrm{NRB}}$
		\For{$l \in 1\dots|\mathcal{V}_k|$}
		\State $(\overline{\alpha}_k^l, \overline{\beta}_k^l) \gets \mathcal{V}_k^l$
		\State Add indicator constraint to $\overline{\mathrm{NRB}}$:
		\State \,\,\,\,$w_{kl} = 1 \Rightarrow (b - Ax)^\top \overline{\alpha}_k^l + (d^\top v + \delta) \overline{\beta}_k^l \leq q_k - G_k^\top x$
		\EndFor
	\EndIf
	\State Solve current iterate $\overline{\mathrm{NRB}}$
	\If{Infeasible}
	\State \textbf{Terminate} with infeasible $k-$th subproblem
	\EndIf
	\State $\hat{\mathcal{S}} \gets \hat{\mathcal{S}} \cap \mathcal{S}_{opt} $
	\EndWhile
	\State \textbf{return} $(\hat{x}, \hat{v}, w)$
\EndFunction
\end{algorithmic}
\end{algorithm}

At any given iteration, the set $\hat{\mathcal{S}}$ is the complement
of $\mathcal{S}$,
\begin{equation*}
\hat{\mathcal{S}} \equiv \left[\![m_u\right]\!] \,\backslash\, \mathcal{S}
\end{equation*}
and $\mathcal{S}_{opt}$ contains the set of upper-level constraint indices
that are robust for a current iterate. If $\hat{\mathcal{S}}$ is empty,
there is no upper-level constraint that is not either added (already in $\mathcal{S}$)
or already optimal (in $\mathcal{S}_{opt}$). Given that $\overline{\mathrm{NRB}}$ is a relaxation
of NRB, $\hat{\mathcal{S}}=\emptyset$ implies that the optimum is reached.\\

At any given iterate of \cref{alg:lazyenumeration}, $\overline{\mathrm{NRB}}(\mathcal{S})$,
a MILP with disjunctive constraints, is solved from scratch.
The advantage over the extended formulation is that each of these MILPs is smaller and contains fewer
indicator constraints than the extended formulation.

\subsection{Single vertex heuristic}\label{sec:singlevtxheuristic}

We now present a heuristic method with a computational cost close to that of the canonical
bilevel problem and computing a high-quality bilevel-feasible near-optimal robust solution.
It only requires optimizing a sequence of at most $m_u$ MILPs with the same variables
as the canonical bilevel problem and at most $m_u$ additional linear constraints, instead
of the disjunctive constraints with a number of terms equal to the number of
vertices of the dual adversarial polyhedron.

\begin{algorithm}[!ht]
\caption{Single-Vertex Heuristic}\label{alg:heuristic}
\begin{algorithmic}[1]
\Function{SingleVertexHeuristic}{$G, H, q, c_x, c_y, A, B, b, d, \delta, \eta$}
	\State $P \gets$ optimistic model with parameters $(G, H, q, c_x, c_y, A, B, b, d)$
	\If{Optimistic infeasible}
	\State \textbf{Terminate} with optimistic infeasible
	\EndIf
	\If{$\exists k$, $k$-th dual adversarial subproblem is infeasible}\label{code:heuristicfeas}
	\State \textbf{Terminate}, no robust solution exists
	\EndIf
	\State $\text{count} \gets 1$
	\State $\mathcal{C} \gets \emptyset$
	\While{$\text{count} > 0$}\Comment{Outer loop} \label{code:outerloopheuristic}
	\State $k \gets 1$
	\State $\text{count} \gets 0$
	\State $(\hat{x},\hat{v}) \gets $ current solution of $P$
	\While{$\text{count} \leq \eta$ and $k \leq m_u$}
		\State $(\alpha, \beta) \gets$ solution to k-th subproblem \eqref{prob:dualadvlazy} parameterized by $(\hat{x},\hat{v})$ \label{code:lowerprob}
		\If{$(b - A \hat{x})^\top \alpha + (d^\top \hat{v} + \delta) \beta > q_k - G_k^\top \hat{x}$}
		\State $\mathcal{C} \gets \mathcal{C} \cup \{k\}$
		\State Add constraint to $P$: $(b - Ax)^\top \alpha + (d^\top v + \delta)\beta \leq q_k - G_k^\top x$
		\State $\text{count} \gets \text{count} + 1$
		\While{$k \in \mathcal{C}$}
			\State $k \gets k + 1$
		\EndWhile
		\EndIf
	\EndWhile
	\State Re-optimize $P$
	\If {Current iterate infeasible}
		\State \textbf{Terminate} with no found solution
	\EndIf
	\EndWhile
	\State \textbf{return} $(\hat{x}, \hat{v})$
\EndFunction
\end{algorithmic}
\end{algorithm}

At any given iterate, $\mathcal{C}$ is the set of upper-level constraint indices
that were already added to the model. New constraints are added in a batched
fashion, with the batch size controlled by the $\eta$ parameter.
Each $k$-th subproblem is added only once, by selecting a single vertex $(\alpha,\beta)$
and using it to enforce the constraint
\begin{equation*}
(b - Ax)^\top \alpha + (d^\top v + \delta) \beta \leq q_k - G_k^\top x.
\end{equation*}
Unlike the exact algorithms, \cref{alg:heuristic} initializes a MILP model
and iteratively adds linear constraints to it. This procedure therefore lends
itself to warm starts and single-tree formulations.

\begin{proposition}
\cref{alg:heuristic} terminates in at most $m_u$ iterations of the outer loop
beginning \cref{code:outerloopheuristic} and solves optimization problems with the same
variables as the canonical optimization problem and at most $m_u$ linear constraints.
\end{proposition}

\begin{proof}
\cref{alg:heuristic} maintains a cache $\mathcal{C}$ of the subproblems that
have been explored.
For each subproblem, exactly one vertex is chosen, which minimizes
\begin{equation*}
  (b - A\hat{x})^\top  \alpha + (d^\top \hat{v} + \delta) \beta
\end{equation*}
with $(\hat{x}, \hat{v})$ the current iterate. For a chosen $\hat{x}$,
the lower-level problem is feasible since $\hat{v}$ is computed, so the dual
problem cannot be unbounded. It cannot be infeasible since its feasibility domain
depends only on the problem data and is verified \cref{code:heuristicfeas}.
Therefore, a vertex $(\alpha,\beta)_k$ is computed. If the objective is greater
than $q_k - G_k^\top \hat{x}$, then the current iterate is not near-optimal robust
with respect to the $k$-th constraint, and the linear constraint:
\begin{equation*}
  (b - Ax)^\top \alpha + (d^\top v + \delta) \beta \leq q_k - G_k^\top x
\end{equation*}
is added to the model. If no addition is made, the current iterate is
near-optimal robust, the count variable remains at 0, and the outer loop exits,
with the function returning the current iterate.
Otherwise, the iterate was not near-optimal robust with respect to at least one
upper-level constraint, which is turned into a constraint and added to the cache.
The outer loop adds at least one constraint in a non-terminating iteration,
therefore, $m_u$ iterations suffice to add all constraints.
Moreover, each subproblem adds exactly one linear constraint, so at most $m_u$
linear constraints are added to the formulation of the canonical problem.
\end{proof}

The $\eta$ parameter controls the maximum number of linear constraints added for
each outer iteration. $\eta = 1$ implies that the algorithm reoptimizes the
problem after adding a single constraint, while $\eta \geq m_u$ will add all the
linear constraints that correspond to upper-level constraints that are not
near-robust at each iterate.\\

Finally, it can be noted that the single-vertex algorithm can be applied even
when the number of vertices in the dual adversarial is infinite, i.e.\ when
the lower-level problem is a convex optimization problem.
The only modification is the optimization of the dual adversarial problem for
fixed values of $(x,v)$ at \cref{code:lowerprob}, where a convex optimization
problem is solved instead of a linear one.

\section{Computational experiments}\label{sec:numerics}

In this section, we demonstrate the applicability of our approach through numerical
experiments on instances of the linear-linear near-optimal robust bilevel problem.
We first describe the sets of test instances and the computational setup and then
the experiments and their results.

\subsection{Instance sets}

Two sets of data are considered.
For the first one, a total number of 1000 small, 200 medium and 100 large random
instances are generated and characterized as follows:
\begin{align*}
&(m_u, m_l, n_l, n_u) = (5,5,5,5) && \text{(small)}\\
&(m_u, m_l, n_l, n_u) = (10, 10, 10, 10) && \text{(medium)}\\
&(m_u, m_l, n_l, n_u) = (20, 10, 20, 20) && \text{(large)}.
\end{align*}

All matrices are randomly generated with each coefficient having a 0.6
probability of being 0 and uniformly distributed on $\left[0,1\right]$
otherwise.
High-point feasibility and the vertex enumeration procedures are run after
generating each tuple of random parameters to discard infeasible instances.
Collecting 1000 small instances required generating 10532 trials,
the 200 medium-sized instances were obtained with 18040 trials and the 100 large
instances after 90855 trials.
A second dataset is created from the 50 MIPS/Random instances of the Bilevel
Problem library \cite{blpinstances}, where integrality constraints are dropped.
All of these instances contain 20 lower-level constraints and no upper-level constraints.
For each of them, two new instances are built by moving either the first 6 or the last 6
constraints from the lower to the upper level, resulting in 100 instances.
We will refer to the first set of instances as the small/medium/large instances
and the second as the MIPS instances.
All instances are available in \cite{besancon_mathieu_2020_4009109} in JLD format,
along with a reader to import them in Julia programs.

\subsection{Computational setup}

All experiments are carried out in Julia 1.6 \cite{bezansonjulia2017}
using the \texttt{JuMP} v0.21 modelling framework \cite{DunningHuchetteLubin2017,legat2020mathoptinterface};
the MILP solver is \texttt{SCIP} 7.0 \cite{scip} with \texttt{SoPlex} 5.0 as the inner LP solver, both with default
solving parameters. \texttt{SCIP} handles indicator constraints in the form of linear
inequality constraints activated only if a binary variable is equal to one.
\texttt{Polyhedra.jl} \cite{juliapolyhedra} is used to model the dual subproblem
polyhedra with \texttt{CDDLib} \cite{CDDLib} as a solver running the double-description
algorithm, computing the list of extreme vertices and rays from the
constraint-based representation.
The exact rational representation of numbers is used in \texttt{CDDLib} instead of
floating-point types to avoid rounding errors.
Moreover, \texttt{CDDLib} fails to produce the list of vertices for some instances
when set in floating-point mode.
All experiments are performed on a workstation with 32GB of RAM and
Intel Xeon 3.5GHz CPUs running Ubuntu 18.04LTS.

\subsection{Bilinear and extended formulation}

To assess the efficiency of the extended formulation, we compare its solution time
to that of the non-extended formulation including bilinear constraints
\eqref{prob:lin101}.
The bilinear formulation is implemented with \texttt{SCIP} using \texttt{SoPlex} as the linear
optimization solver and \texttt{Ipopt} as the nonlinear solver.
\texttt{SCIP} handles the bilinear terms through bound computations and spatial branching.
Out of all MIPS and ALTMIPS sets, only one instance is solved with the bilinear
formulation within the time limit in half a second, a time similar to the extended formulation.
The bilinear formulation not only runs out of time, but also of memory
(a memory limit of 5000MB was fixed in this setting).
The spatial branching required to handle the non-convex bilinear inequalities
is thus more time- and memory-consuming than the branching over disjunctive constraints
introduced by the extended formulation. Because of the vertices of the dual
adversarial subproblems being optimal solutions as developed in \cref{sub:extended},
the disjunctive constraints explicitly constrain the optimality candidates to a discrete set
instead of exploring a continuous set of $(\alpha,\beta)$ solutions through spatial branching. 

\subsection{Robustness of optimistic solutions and influence of $\delta$}

We solve the MIPS instances to optimistic bilevel optimality and verify the near-optimal
robustness of the obtained solutions. We use various tolerance values:
\begin{equation*}
\delta = \text{max}(0.05, \delta_r \times \text{opt}(L))
\end{equation*}
with $\text{opt}(L)$ the lower-level objective value at the obtained optimistic solution and
\begin{equation*}
\delta_r \in \{0.01, 0.05, 0.1, 0.5, 3.0\}.
\end{equation*}
Out of the 100 instances, 57 have canonical solutions that are not
robust to even the smallest near-optimal deviation $0.01 \text{opt}(L)$.
Twelve more instances that have a near-optimal robust solution with the lowest tolerance
are not near-optimal robust when the tolerance is increased to $3\text{opt}(L)$.
Out of the 57 instances that are not near-optimal robust with the lowest tolerance,
40 have exactly one upper-level constraint that is violated by near-optimal
deviations of the lower level and 17 that have more than one.
Finally, we observe that the number of violated constraints changes across the
range of tolerance values for 31 out of 100 instances. For the other 69 instances,
the number of violated upper-level constraints remains identical for all tolerance values.





\subsection{Computation time of the different algorithms}

Statistics on the computation times of the vertex enumeration and solution phases for the
extended formulation on each set of instances are provided in \cref{tab:vertextime} and \cref{tab:comptime}.

\begin{table}[ht]
\centering
\begin{tabular}{|c|c|c|c|c|}
\hline
Set & mean & $10\%$ quant. & $50\%$ quant. & $90\%$ quant. \\
\hline
MIPS & 15.79 & 0.15 & 2.38 & 53.37\\
\hline
ALTMIPS & 0.17 & 0.05 & 0.13 & 0.42\\
\hline
\end{tabular}
\caption{Runtime statistics for the vertex enumeration (s).}
\label{tab:vertextime}
\end{table}

\begin{table}[ht]
\centering
\begin{tabular}{|c|c|c|c|c|c|}
\hline
Set & $\#$ optimized & mean & $10\%$ quant. & $50\%$ quant. & $90\%$ quant. \\
\hline
MIPS & 47 & 474.657 & 2.583 & 71.219 & 1659.787\\
\hline
ALTMIPS & 41 & 603.040 & 1.392 & 66.819 & 1751.774\\
\hline
\end{tabular}
\caption{Runtime statistics for the optimization phase (s).}
\label{tab:comptime}
\end{table}

The solution time is greater than the vertex enumeration phase which is thus not the bottleneck to solve NRB instances.\\

We also studied the sensitivity of NRB solutions to variations of $\delta$ on small-size random instances.
When $\delta$ increases, the number of problems solved to optimality monotonically decreases;
greater $\delta$ values indeed reduce the set of feasible solutions to {NRB}.
The optimal values of the upper-level only slightly increase with $\delta$
and the lower-level objective value does
not vary significantly with $\delta$.
\par Even though more instances become infeasible as $\delta$ increases,
the degradation of the objective value is in general insignificant for the
optimal near-optimal robust solution compared to the optimistic solution.\\


\begin{figure}[ht]
\centering
\includegraphics[width=0.8\textwidth]{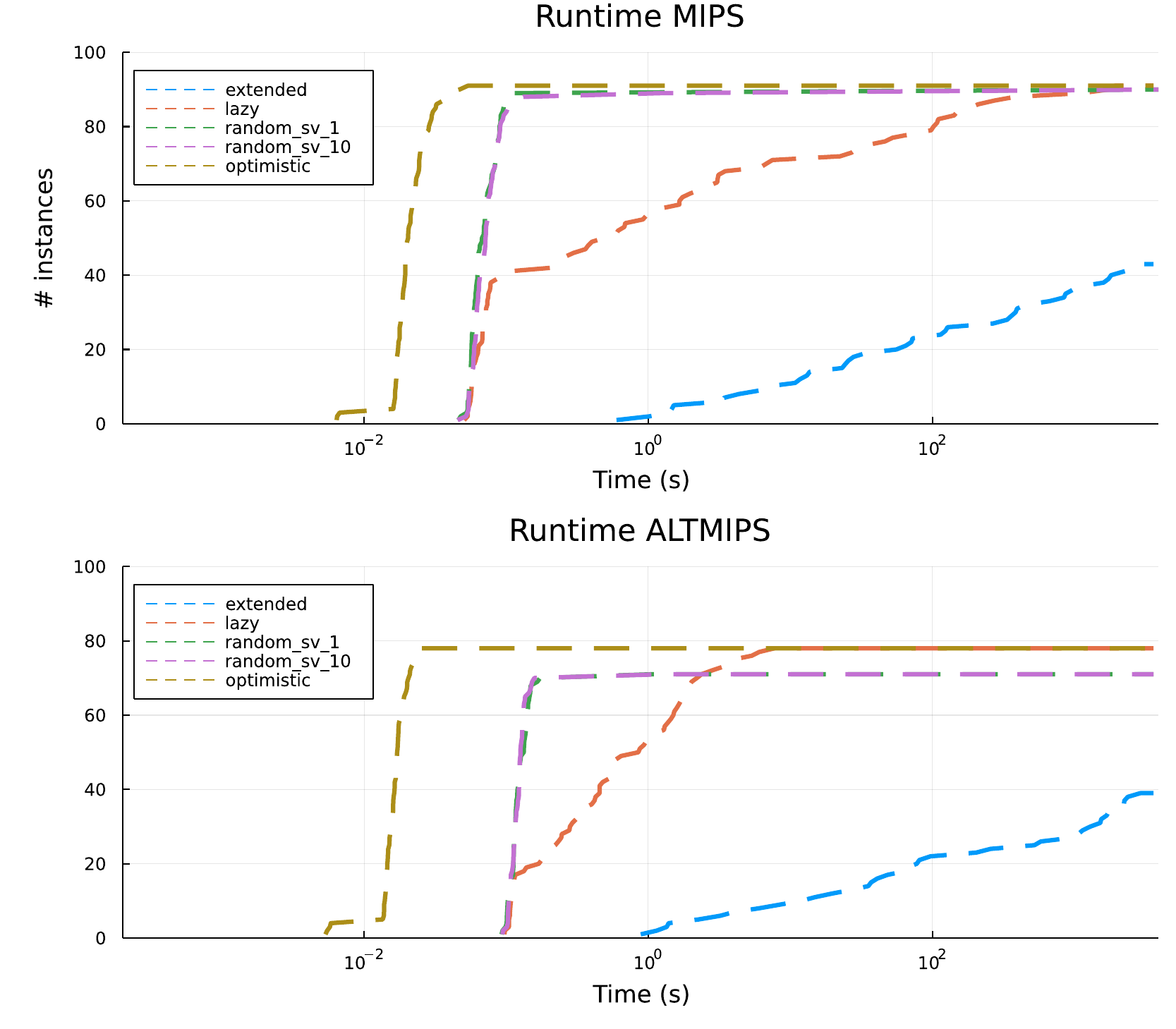}
\caption{Comparing the different solution methods on the two instance sets.}
\label{fig:comparisonall}
\end{figure}

The different solution methods are compared in \cref{fig:comparisonall}.
The single vertex heuristics with various batch sizes, labelled
\texttt{random\_sv\_batchsize}, are the fastest to compute near-optimal robust
solutions, slightly slower than solving the optimistic bilevel formulation.
On exact methods, the \texttt{lazy} vertex enumeration \cref{alg:lazyenumeration}
outperforms the eager \texttt{extended} formulation
and terminates within the time limit for all instances.
One MIPS instance is bilevel feasible but does not possess a near-optimal robust
solution. On 90 MIPS and 71 ALTMIPS instances, the single vertex heuristic successfully finds
a solution while it terminates unsuccessfully in one and seven cases respectively.
Furthermore, the heuristic runtime is fast enough compared to the exact methods
that it could be improved by randomly removing some vertices that were added in the
first iterations to then possibly reintroduce them to the formulation.
Finally, we note that when the single-vertex heuristic finds a solution,
it is one of high quality. On no MIPS instance and only two ALTMIPS instances
is the objective value of the heuristic solution not equal to that of the
exact method, verified with the lazy algorithm.

\subsection{Implementation of valid inequalities}

In the last group of experiments, we implement and investigate the impact of
the valid inequalities defined in \cref{sub:videf}.\\

The valid inequality procedure found cuts for all 100 MIPS instances.
For 98 of these, the procedure terminates after one iteration, the two other
instances terminate with a cut after 4 and 8 iterations.
On the 100 ALTMIPS instances, 18 are bilevel-infeasible,
none of which had an infeasible high-point relaxation.
For 12 of these instances, adding the valid inequalities was
enough to prove their infeasibility.
For all instances, the procedure computed non-trivial valid inequalities
i.e.~all coefficients of $A^+$ are finite.

These results highlight the improvement of the model tightness with the addition of
the valid inequalities, compared to the high-point relaxation where
primal and dual variables are subject to distinct groups of constraints.
These inequalities thus discard infeasible instances without the need to solve
a MILP.
For all but two MIPS instances, a single iteration of the procedure computes
the final valid inequality.
12 out of 100 ALTMIPS instances require more than one iteration
with one instance requiring up to 32 iterations.

We also compared the total runtime for MIPS and ALTMIPS instances under near-optimality robustness constraints using $\delta=0.1$
with and without valid inequalities for all instances solved to optimality.
Valid inequalities do not improve the runtime for NRB in either group of instances,
This result is similar to the observations in \cite{kleinert2020closing} for instances
of the canonical bilevel linear problem without near-optimality robustness.\\

We next study the inequalities based on the upper-level constraints on the small,
medium and MIPS instances.

\begin{figure}[ht]
\includegraphics[width=0.8\textwidth]{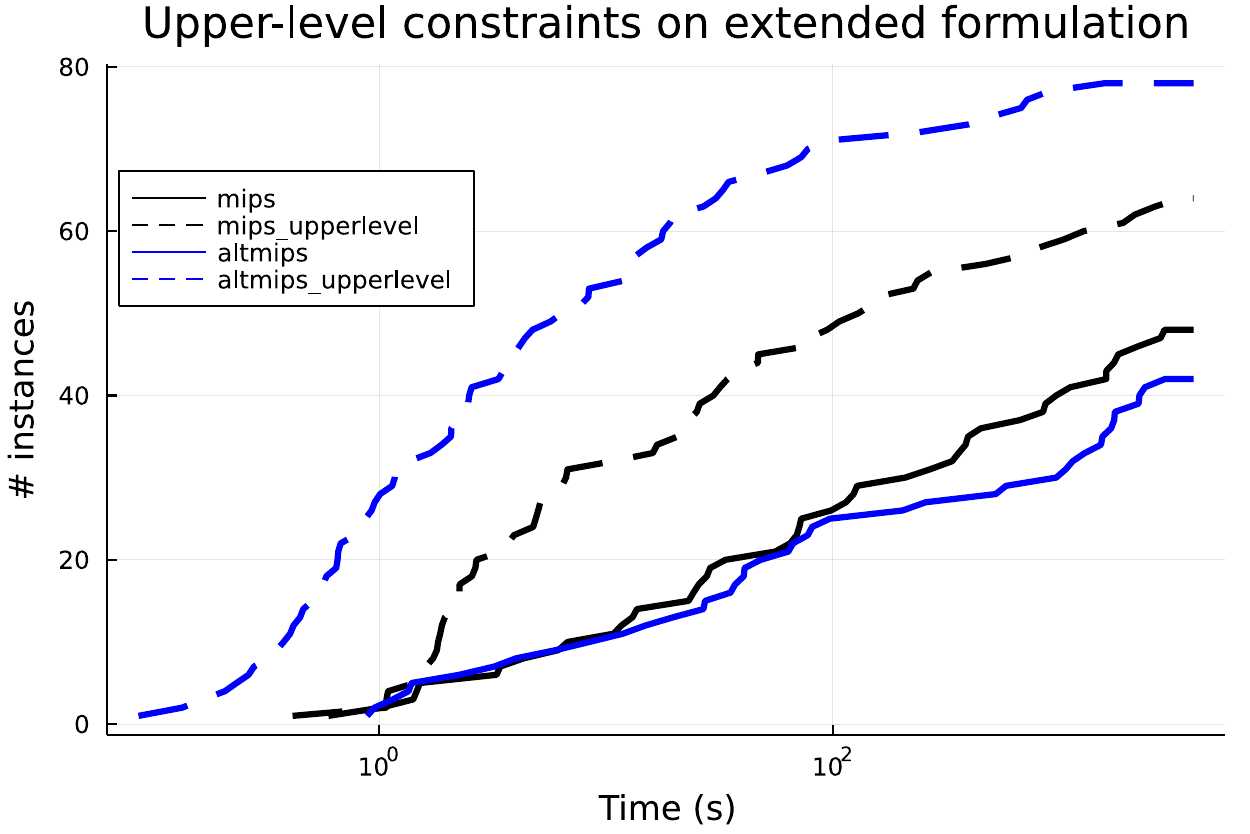}
\caption{Runtime for MIPS and ALTMIPS instances with and without upper-level constraints.}
\label{fig:upperlevelcons}
\end{figure}

\noindent
As shown in \cref{fig:upperlevelcons}, the addition of primal upper-level
constraints accelerates the resolution of the MIPS and even more of
ALTMIPS instances and dominates the standard extended formulation.

\section{Conclusion}\label{sec:conclusionNRB}

In this work, we introduce near-optimal robust bilevel optimization, a specific
formulation of the bilevel optimization problem 
does not make an explicit optimistic/pessimistic assumption about the reaction of the lower level, but instead seeks an optimal decision at the upper level that is  robust in the sense that it remains feasible even if the lower level deviates from its own optimality within a prescribed near-optimal set for the lower level.
Near-optimality robustness challenges the assumption that the lower-level problem
is solved to optimality, resulting in a generalized, more conservative formulation
including the optimistic and pessimistic bilevel problems as special cases.
We formulate NRB in the dependent case, i.e.\ where the upper- and lower-level
constraints depend on both upper- and lower-level variables, thus
offering a framework applicable to many bilevel problems of practical interest.
\\

We derive a closed-form, single-level expression of NRB for convex
lower-level problems, based on dual adversarial certificates to guarantee near-optimality robustness.
In the linear case, we derive an extended formulation
that can be represented as a MILP with indicator constraints.
This extended formulation lends itself to a faster exact lazy method and
a single-vertex heuristic leveraging the problem structure to find high-quality
feasible solutions.
Numerical experiments highlight the efficiency of the extended method compared
to the compact bilinear formulation, the impact of valid inequalities on
both solution time and tightness of the linear relaxation.
Finally, they highlight the interest of the lazy and heuristic formulation,
showing a solution time only slightly slower than the corresponding 


\bibliographystyle{ieeetr}
\bibliography{refs}

%


\end{document}